\pgfplotsset{compat=1.18} 
\newtheorem{theorem}{Theorem}[section]
\newtheorem{proposition}[theorem]{Proposition}
\newtheorem{corollary}[theorem]{Corollary}
\theoremstyle{definition}
\newtheorem{definition}[theorem]{Definition}
\theoremstyle{remark}
\newtheorem{remark}[theorem]{Remark}
\numberwithin{equation}{section}
\title{Poincaré series of semigroups}
\author{Antonio Campillo}
\address{Departamento de Álgebra, Análisis Matemático y Geometría y Topología Facultad de Ciencias Universidad de Valladolid\\
Paseo de Belén 7 47011 Valladolid\\
Spain}
\email{antonio.campillo@uva.es}
\thanks{}
\author{Raquel Melgar}
\address{LaBRI, Université de Bordeaux \\
Domaine Universitaire, 351, Cours de la Libération, 33405 Talence \\
France}
\email{raquel.melgar@labri.fr}
\date{}
\thanks{}
\begin{document}

\begin{abstract}
Many invariants of finitely generated positive cancelative commutative semigroups can be studied from their Poincaré series. We offer and present several closed formulas for them.  Moreover, those formulas have elementary proofs and are presented in two distinct forms: one characterized in terms of simplicial complexes and the other by a purely set theoretical approach.
\end{abstract}

\maketitle
\textit{Dedicated to Professors Alejandro Melle-Hernández and Enrique Artal Bartolo on the occasion of their 55th and 60th birthdays, respectively.}

% ===========================

\section{Introduction}
Algebraic Poincaré series of one or more variables are classically used in algebra as generating functions of invariants of graded structures; see, for instance \cite{MS}, \cite{P}. In the last 30 years, Poincaré series of several variables have been introduced in \cite{CDK}, \cite{CDGZ1}, \cite{CDGZ2} and used in geometry and topology to generate invariants in terms of multi-index filtrations, integrations with respect Euler characteristics, and calculations with Laurent Series. Those series , say topological Poincaré series, have been computed and applied to algebraic varieties and singularities, mainly curves, surfaces and toric varieties.

For affine toric varieties, i.e., those given algebraically by affine semigroups, it was proved in  \cite{L} that their Poincaré series, with respect their natural filtrations given by monomial valuations, are linear images of the classical generating functions, that we also call algebraic Poincaré series of affine semigroups. This poses the problem of studying the algebraic Poincaré series of the semigroup in detail and in depth, and obtaining formulas for it.

In this paper we study the algebraic Poincaré series of finitely generated positive cancelative commutative semigroups, namely of their semigroup algebras, following the philosophy and the techniques used in \cite{CG}, \cite{CP} and \cite{BCPV} for studying graded syzygies.

We offer and present closed formulas for this Poincaré series in different terms. On one hand, in terms of simplicial complexes associated to the semigroup elements and their versions in terms of the Alexander duals of those simplicial complexes; on the other hand, purely set theoretical ones, in terms of the monomial indicator series of certain key basic sets and their versions in terms of colored graphs.

We also study the combinatorial impact of those formulas on complete intersections, symmetrical semigroups, and the depth of the semigroup algebra. Since they are not needed for our purposes, in the article we have avoided syzygies, and, therefore, also their corresponding syzygy Poincaré series in an additional variable $v$. However, we also discuss how such Poincaré series is also explicitly determined by above key sets.

\section{Simplicial complexes and basic sets}
\subsection{Preliminaries}
Along this paper $S$ will stand for a finitely generated positive cancelative commutative semigroup. 

A semigroup is a set endowed with an inner operation $+$ having a $0$ element. The semigroup is commutative when $+$ is commutative, it is cancelative when $m+n=m'+n$ is only possible if $m=m'$, it is positive when $m+n=0$ is only possible if $m=n=0$ and it is finitely generated when finitely many of its elements generate the semigroup in the sense that any element is a linear combination with non negative integers as coefficients of the generators.

The group $G(S)$ generated by $S$ is the group of classes of pairs $(m,n) \in S \times S$ for the equivalence relationship $$(m,n)\sim (m',n')\quad \iff \quad m+n'=m'+n.$$

The cancelative property implies that the obvious semigroup homomorphism $S \longrightarrow G(S)$ is injective, and therefore $S$ can be viewed as a subsemigroup of $G(S)$. The positive condition means that $$S \cap (-S)=\{0\},$$ $S$ and $-S$ seen as subsets of $G(S)$. The commutative property implies that $G(S)$ is an abelian group and the finitely generated property implies that $G(S)$ is isomorphic to $\mathbb{Z}^d \times T$ where $T$ is the torsion subgroup of $G(S)$ and $d$ is a well defined integer called the rank of the group $G(S)$ and dimension of the semigroup $S$.

The cone $C(S)$ is the cone generated by the image of $S$ in $G(S)\otimes_{\mathbb{Z}} \mathbb{Q} \cong\mathbb{Q}^d$, i.e., the convex cone in $\mathbb{Q}^d$ generated by the halflines of directions $m\otimes 1$ for $m\in S$. The positive property is equivalent to $C(S)$  being a strongly convex cone, i.e., $C(S)\cap (-C(S))=\{0\}$ and $\mathbb{Q}^d$ being the $\mathbb{Q}$-linear space generated by $C(S)$, therefore $d$ is also the dimension of the cone $C(S)$. Moreover, $C(S)$ is a strongly convex rational polyhedral cone.

The positive property for finitely generated cancelative commutative semigroups is equivalent to the existence of a semigroup homomorphism $\lambda: S \longrightarrow \mathbb{N}$ such that $\lambda(m)=0$ iff $m=0$ and also to the fact that for each $m\in S$ there are only finitely many ways to write $m$ as a linear combination of given generators (see \cite{CP}, proposition 2.1).

Now, consider a symbol $t$ and another ones, called powers, $t^m$ for $m\in S$. The semigroup ring $R=\mathbb{Z}[S]$ is the set of expressions $$\sum_{m\in S}a_m t^m$$ $a_m \in \mathbb{Z}$ and where there are only finitely many nonzero $a_m$. The ring structure is given by the obvious addition and the multiplication determined by the rule $t^m \cdot t^n=t^{m+n}$.

In the same way, the power series ring $\hat{R}=\mathbb{Z}[[S]]$ is defined as above but allowing infinitely many non zero $a_m$ and taking into account that the rule $t^m \cdot t^n=t^{m+n}$ determines a well defined product because of the existence of the homomorphism $\lambda$, as it has finite fibers as a map. Also $\lambda$ allows to prove that the invertible series, say the units $s$ of the ring $\hat{R}$ are exactly those whose coefficient $a_0$ (say, the independent term) is $+1$ or $-1$. In particular, $1-t^m$ is invertible for all $m\in S\backslash \{0\}$ and its inverse element is the series $$\sum_{i=0}^\infty t^{im}.$$

For the abelian group $G(S)$, viewed as a semigroup, one has larger similar objects $L=\mathbb{Z}[G(S)]$ and $\hat{L}=\mathbb{Z}[[G(S)]]$ than $R$ and $\hat{R}$ now, allowing symbols $t^m$ for $m\in G(S)$. Both $L$ and $\hat{L}$ are additive abelian groups. $L$ is a ring, $\hat{L}$ is not a ring but it is a $L$-module.

Finally, the semigroup is said to be an affine semigroup when the torsion subgroup $T$ of $G(S)$ is trivial, i.e., $T=0$. Affine semigroups are given, in practice, as finitely generated positive subsemigroups $S$ of $\mathbb{Z}^h$ for an integer $h>0$. Thus, $G(S)$ is the group of lattice points of $\mathbb{Z}^h$ lying in the linear variety of $\mathbb{Q}^h$ spanned by $S$. If $d$ is the dimension of this variety, the torsion free group $G(S)$ is isomorphic to $\mathbb{Z}^d$, so $d$ is also the dimension of $S$.

If one chooses a basis of the group $G(S)$ as a free $\mathbb{Z}$-module, the symbol $t$ can be identified with a collection of variables $t_1,t_2,...,t_d$ and $t^m$ with the monomial $t_1^{m_1}\cdot t_2^{m_2}\cdot...\cdot t_d^{m_d}$ if $m=(m_1,m_2,...,m_d)\in G(S)$. Thus computations in $R$, $\hat{R}$, $L$ and $\hat{L}$ become computations with Laurent polynomials and series.

However, general finitely generated positive cancelative commutative semigroups are not affine. A typical example for it is the semigroup $S = \ell (\mathbb{N}^r)$, where $\ell: \mathbb{Z}^r\rightarrow G$ is a surjective group homomorphism, $G$ is an abelian group and $\mathbb{N}$ is the set on nonnegative integers in $\mathbb{Z}$. In this case, $G = G(S)$ is a finitely generated abelian group which has torsion in general. 

Also notice that other different notions of affine semigroups appear in the literature. For instance, in \cite{CC} affine semigroups, even topological affine semigroups are defined in the context of real topological vector spaces. But, these notions does not fit in our above context of finitely generated commutative semigroups.

\begin{definition}
The algebraic Poincaré series of the semigroup $S$, or simply the generating series of $S$, is the element of $\hat{R}$ given by $$P=P_S=\sum_{m\in S} t^m.$$
\end{definition}

The algebraic Poincaré series for the group $G(S)$ can be defined as the element $P_{G(S)}$ of $\hat{L}$ given by $$P_{G(S)}=\sum_{m\in G(S)}t^m.$$

\subsection{Formulas for Poincaré series}
In this subsection, we will compute the Poincaré series $P$ by two types of formulas, one in combinatorial terms and other in set theoretical ones. It means that one has a formula of each type for each choice of a non empty finite subset $E$ of $S\backslash\{0\}$. In practice, $E$ can be any subset satisfying those properties.

The key topological tool we consider are the abstract simplicial complexes $T_m$, $m\in S$, given by $$T_m=\{J\subset E\,|\, m-e_J\in S\}$$ where $e_J=\sum_{e\in J}e$.

Notice that $m-e_J$ takes sense in $G(S)$. The subsets $J$ in $T_m$ are called faces of dimension $\# J-1$ of $T_m$. Also notice that the empty set $\emptyset$ is a face of dimension $-1$ of $T_m$, and that for every choice of a field $\mathbb{K}$, it is defined the $\mathbb{K}$-vector space reduced simplicial homology $\tilde{H}_i(T_m)$ and cohomology $\tilde{H}^i(T_m)$ both of dimension $\tilde{h}_i(T_m)$, $i\geq -1$ which is 0 for $i\geq \# E -1$ (see for instance \cite{MS} page 9 for the precise definitions). The reduced homology $\tilde{h}_i(T_m)$ depends on the characteristic of the field, however the reduced Euler characteristic
$$\tilde{\chi}(T_m)=\sum_{i=-1}^{\#E-2}(-1)^i\tilde{h}_i(T_m)=\sum_{J\in T_m}(-1)^{\#J-1}$$ depends only on $S$ and not on the characteristic of the field. The subcomplexes $T_m$ were considered and used in \cite{H} and in \cite{CM} and later extensively used in \cite{CG}, \cite{CP}, \cite{BCPV} among others, in order to compute the syzygies of affine toric varieties. The simplicial complexes $T_m$ allow to use not only combinatorics, but also topology as their reduced homology coincides with the reduced singular homology of their topological realization in the euclidean space.

The key basic set theoretical tools we consider are the subsets $Q$, $D$ of $S$, dependent of $E$, given by
\begin{align*}
Q & =\{m\in S\, |\, m-e \notin S\text{ for all }e\in E \}\\
D & =\{m\in S\, |\, m-e_{supp(m)} \notin S\}
\end{align*}
where $supp(m)=\{e \in E\, |\, m-e \in S \}$.

More precisely, we will also consider the subsets $D^J$ of $S$ for $J \subset E$, $\# J\geq 2$, given by 
\begin{align*}
D^J & = \{m\in S\, |\, J \subset supp(m)\text{ and } m-e_J \notin S\}\\
	& = \{m\in S \, |\, J \subset supp(m)\text{ and } J \notin T_m\}.
\end{align*}

The set $D$ is the union of the sets $D^J$, $J\subset E$, $\#J \geq 2$. In practice, as done in \cite{CG}, this union is a good conceptual definition for $D$. Notice that $Q\cap D=\emptyset$, as the elements $m\in Q$ are characterized by the property $supp(m)=\emptyset$. Also they are characterized by the property $T_m=\{\emptyset\}$, i.e. $T_m$ is the simplicial complex having only the empty face. 

The set $Q$ is well known in the literature and it is usually called the Apéry set with respect of $E$. The sets $Q$, $D$ and $D^J$ were introduced in \cite{CG}.

When $\#E$ is small, for instance $\#E \leq 4$, the complexes $T_m$ are easily visualized and therefore their homology and $\tilde{\chi}$ are easy to compute. 

In  \cite{CG} and \cite{CP} it is proved that if $E$ generates the cone $C(S)$ then $\tilde{\chi}(T_m)\neq 0$ holds only for finitely many elements $m\in S$.

Finally for each subset $B\subset S$ define $N_B=\sum_{m\in B}t^m\in \hat{R}$ the monomial indicator, or simply indicator, of $B$. The main result of this section is the following one.

\begin{theorem}
\label{teorema_puntos}
With definitions and assumptions as above one has
\begin{enumerate}[itemsep=5mm]
\item $P=P_S=\dfrac{-\sum_{m \in S}\tilde{\chi}(T_m) t^m}{\prod_{e \in E}(1-t^e)}$
\item $P=P_S=\dfrac{N_Q-\sum_{J \subset E,\, \#J\geq 2}(-1)^{\#J}N_{D^J}}{\prod_{e \in E}(1-t^e)}$
\item If $E$ generates the cone $C(S)$ then $$\prod_{e \in E} (1-t^e) \cdot P = p \in R$$
\item If $q'$, $p'$ are elements of $R$ such that $q' \cdot P = p'$ then one has $p \cdot q'=p'\cdot q$, where $q = \prod_{e\in E}(1-t^e)\in R = \mathbb{Z}[S]$ is a unit in $\hat{R}=\mathbb{Z}[[S]]$.
\end{enumerate}
\end{theorem}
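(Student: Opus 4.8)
The plan is to prove the four items in the order stated, deducing (2), (3) and (4) from (1) together with elementary bookkeeping. For (1) the idea is a direct expansion: writing the finite product as $\prod_{e\in E}(1-t^e)=\sum_{J\subset E}(-1)^{\#J}t^{e_J}$ and multiplying by $P=\sum_{n\in S}t^n$ gives
$$\prod_{e\in E}(1-t^e)\cdot P=\sum_{J\subset E}\ \sum_{n\in S}(-1)^{\#J}\,t^{e_J+n}.$$
For each fixed $m$ only finitely many pairs $(J,n)$ satisfy $e_J+n=m$ (namely $n=m-e_J$ for the finitely many $J\subset E$ with $m-e_J\in S$), so collecting the coefficient of $t^m$ is legitimate in $\hat{R}$ and equals $\sum_{\{J\subset E\,:\,m-e_J\in S\}}(-1)^{\#J}=\sum_{J\in T_m}(-1)^{\#J}=-\tilde{\chi}(T_m)$, by the definitions of $T_m$ and of $\tilde{\chi}(T_m)$ recalled above; for $m\notin S$ both sides are $0$ since every exponent $e_J+n$ lies in $S$. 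This proves (1).

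The most involved step is (2), which I would reduce, via (1), to checking for each $m\in S$ the numerical identity
$$-\tilde{\chi}(T_m)=[\,m\in Q\,]-\sum_{J\subset E,\ \#J\geq 2}(-1)^{\#J}\,[\,m\in D^J\,],$$
where $[\,\mathcal{P}\,]$ denotes $1$ if $\mathcal{P}$ holds and $0$ otherwise; multiplying by $t^m$ and summing over $m\in S$ then turns the right-hand side into $N_Q-\sum_{J,\ \#J\geq 2}(-1)^{\#J}N_{D^J}$ (a finite sum, hence an element of $\hat{R}$). If $supp(m)=\emptyset$, then $m\in Q$, $T_m=\{\emptyset\}$, $-\tilde{\chi}(T_m)=1$, and the right-hand side is also $1$ since no $J$ with $\#J\geq 2$ satisfies $J\subset supp(m)$. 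If $supp(m)\neq\emptyset$, then $m\notin Q$; moreover every face $J\in T_m$ satisfies $J\subset supp(m)$ (if $m-e_J\in S$ and $f\in J$ then $m-f=(m-e_J)+e_{J\setminus\{f\}}\in S$), so $T_m$ is a subcomplex of the full simplex $\Delta$ on $supp(m)$, and $m\in D^J$ holds exactly when $J\subset supp(m)$, $\#J\geq 2$ and $J\notin T_m$. Therefore the sum in the identity equals $\sum_{J\subset supp(m),\,\#J\geq 2}(-1)^{\#J}-\sum_{J\in T_m,\,\#J\geq 2}(-1)^{\#J}$. Using $\sum_{J\subset supp(m)}(-1)^{\#J}=0$, $\sum_{J\in T_m}(-1)^{\#J}=-\tilde{\chi}(T_m)$, and that the contributions of the $J$ with $\#J\leq 1$ coincide for $\Delta$ and for $T_m$ (each contains $\emptyset$ and exactly the $\#supp(m)$ singletons of $supp(m)$), these two sums are $\#supp(m)-1$ and $-\tilde{\chi}(T_m)-1+\#supp(m)$; their difference is $\tilde{\chi}(T_m)$, so the right-hand side of the identity is $0-\tilde{\chi}(T_m)=-\tilde{\chi}(T_m)$, as wanted.

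Finally, (3) is immediate from (1) and the fact recalled above that, when $E$ generates $C(S)$, $\tilde{\chi}(T_m)\neq 0$ for only finitely many $m$: the numerator $p:=\prod_{e\in E}(1-t^e)\cdot P=-\sum_{m\in S}\tilde{\chi}(T_m)t^m$ then has finite support and so lies in $R$. For (4), recall from the preliminary material above that each $1-t^e$, hence $q=\prod_{e\in E}(1-t^e)$, is a unit of $\hat{R}$, and that $q\cdot P=p$ by (1); then for any $q',p'\in R$ with $q'\cdot P=p'$, commutativity of $\hat{R}$ gives $p\cdot q'=(q\cdot P)\cdot q'=q\cdot(q'\cdot P)=q\cdot p'=p'\cdot q$. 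Apart from the bookkeeping in (2), everything is formal; the only subtlety worth stressing is that all the rearrangements of series above are valid in $\hat{R}$ precisely because the homomorphism $\lambda$ has finite fibers.
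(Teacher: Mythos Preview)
Your proof is correct and follows essentially the same line as the paper's: part (1) is obtained by expanding $\prod_{e\in E}(1-t^e)$ and regrouping, part (2) by comparing coefficients of $t^m$ via the binomial identity $\sum_{J\subset supp(m)}(-1)^{\#J}=0$, part (3) by invoking the cited finiteness of $\{m:\tilde{\chi}(T_m)\neq 0\}$, and part (4) by commutativity in $\hat{R}$. The only cosmetic difference is that in (2) the paper splits the case $supp(m)\neq\emptyset$ further into $m\in D$ versus $m\notin Q\cup D$ (the latter forcing $T_m$ to be the full simplex on $supp(m)$), whereas you handle both at once with a single bookkeeping computation; your uniform treatment is slightly cleaner but not a different idea.
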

\begin{proof}
First, one has $$\tilde{\chi}(T_m)=\sum_{J\in T_m}(-1)^{\#J-1}.$$
Now, one has the equalities 
\begin{align*}
\prod_{e\in E}(1-t^e)P & = \sum_{J \subset E}(-1)^{\#J}\sum_{n\in S}
t^{n+e_J}\\
& = -\sum_{m\in S}(\sum_{J\in T_m}(-1)^{\#J-1})t^m= -\sum_{m\in S}\tilde{\chi}(T_m)t^m.
\end{align*}

This proves \textit{(1)} since the factors $1-t^e$ are units in $\hat{R}$ and the term $\frac{1}{1-t^e}$ stands for the inverse series of the unit $1-t^e$. 

Second, \textit{(2)} follows from the equality 
$$N_Q-\sum_{J\subset E,\, \#J\geq 2}(-1)^{\#J} N_{D^J}=-\sum_{m\in S}\tilde{\chi}(T_m)t^m.$$
To check this equality, one needs to see that the coefficients of $t^m$ in both sides are the same. 

If $m\notin Q$, $m \notin D^J$ for every $J\subset E$, $\#J \geq 2$, then $m-e_{supp(m)}\in S$, so $T_m$ is the full simplex in the vertices of $supp(m)$, hence $\tilde{\chi}(T_m)=0$ and both coefficients are equal to 0. If $m\in Q$, one has $m\notin D^J$ for all $J\subset E$, $\#J\geq 2$, since $Q\cap D=\emptyset$. Moreover, $T_m=\{\emptyset\}$, so $\tilde{\chi}(T_m)=-1$ and the coefficients of both sides are equal to 1. If $m\in D$, then $m\notin Q$. One has $\# supp (m)\geq 2$ and $m-e_{supp(m)}\notin S$. Thus, the coefficient on the left is equal to $$-\sum_{\substack{J\subset supp(m)\\ J\notin T_m}}(-1)^{\# J}$$ taking into account that $m\in D^J$ iff $J\notin T_m$. On the other hand, the obvious equality
$$\sum_{\substack{J\subset supp(m)\\ J\in T_m}}(-1)^{\#J}+\sum_{\substack{J\subset supp(m)\\ J\notin T_m}}(-1)^{\#J}=\sum_{J\subset{supp(m)}}(-1)^{\#J}=(1-1)^{\#supp(m)}=0$$ proves that the coefficient on the left is also equal to $$\sum_{\substack{J\subset supp(m)\\J\in T_m}}(-1)^{\#J}=-\tilde{\chi}(T_m)$$ and therefore equal to the coefficient on the right. This coefficient is deduced to be equal by using the Newton's binomial formula.

Statement \textit{(3)} follows from the algebraic fact that if $E$ generates the cone $C(S)$ then $\tilde{\chi}(T_m)\neq 0$ for only finitely many semigroup elements $m$. This was proved in detail in \cite{CG}. Thus the series $-\sum_{m\in S}\tilde{\chi}(T_m)t^m$ is an element of $R$.

Finally \textit{(4)}, follows from the elementary facts $q= \prod{e\in E}(1-t^e)\neq 0$, it is a unit of $\hat{R}$, and  $$q'p=(q'q)P=(qq')P=qp', $$ the three elements $p$, $q$ and $q'$ being in R and $p'$ being in $\hat{R}$.
\end{proof}

As said before, for semigroups the symbol $t$ is realized as a set of variables, $P$ is a power series, $p$, $q$, $p'$ and $q'$ are polynomials and $P=\frac{p}{q}= \frac{p'}{q'}$ is also a well defined rational function of $\mathbb{Q}[t]$. This fact is elementary but, a priori, nontrivial.

For general semigroups, $q$ is invertible in $\hat{R}$, so $P=\frac{p}{q}=p q^{-1}$ is also a equality of elements of $\hat{R}$. If $q'$ is invertible too, $P=\frac{p'}{q'}=p'q'^{-1}$ as elements of $\hat{R}$ as well. For us, in practice $$q'=\prod_{e'\in E'}(1-t^{e'})$$ for other subsets $E'$ of $S$, so the various expressions $P=\frac{p}{q}=\frac{p'}{q'}$ show how $p$ and $p'$ are related when $E'$ is replaced by $E$ or viceversa, i.e. how the data $Q$, $D^J$ and $\tilde{\chi}(T_m)$ are related for both choices. 

\subsection{Particular choices of $E$}
If $\#E=1$, $E=\{e\}$, the combinatorial tools $T_m$ have at most one vertex. So, there are only two possibilities: $T_m$ has only the empty face or it is a topological point; thus, $\tilde{\chi}(T_m)=-1$ in the first case and $\tilde{\chi}(T_m)=0$ in the second one. The only set theoretical tool is the Apéry set $Q=\{m\in S\, |,\, m-e\notin S\}$, and one has $T_m$ is the empty face for $m\in Q$ and a point for $m\notin Q$. The algebraic Poincaré series is just given by $$P=\frac{N_Q}{1-t^e}.$$

If $\#E=2$, $E=\{e, e'\}$, $T_m$ is either the empty face, one of the two possible points, both points together and isolated or a line segment joining both points. One has $\tilde{\chi}(T_m)=0$ in the second and fourth cases, $\tilde{\chi}(T_m)=-1$ in the first and $\tilde{\chi}(T_m)=1$ in the third one, the last two corresponding to $m\in Q$ and $m\in D=D^E$ respectively. Thus, the Poincaré series is now given by $$P=\frac{N_Q-N_D}{(1-t^e)(1-t^{e'})}.$$ 

\begin{figure}[h]
\includegraphics[width=50mm]{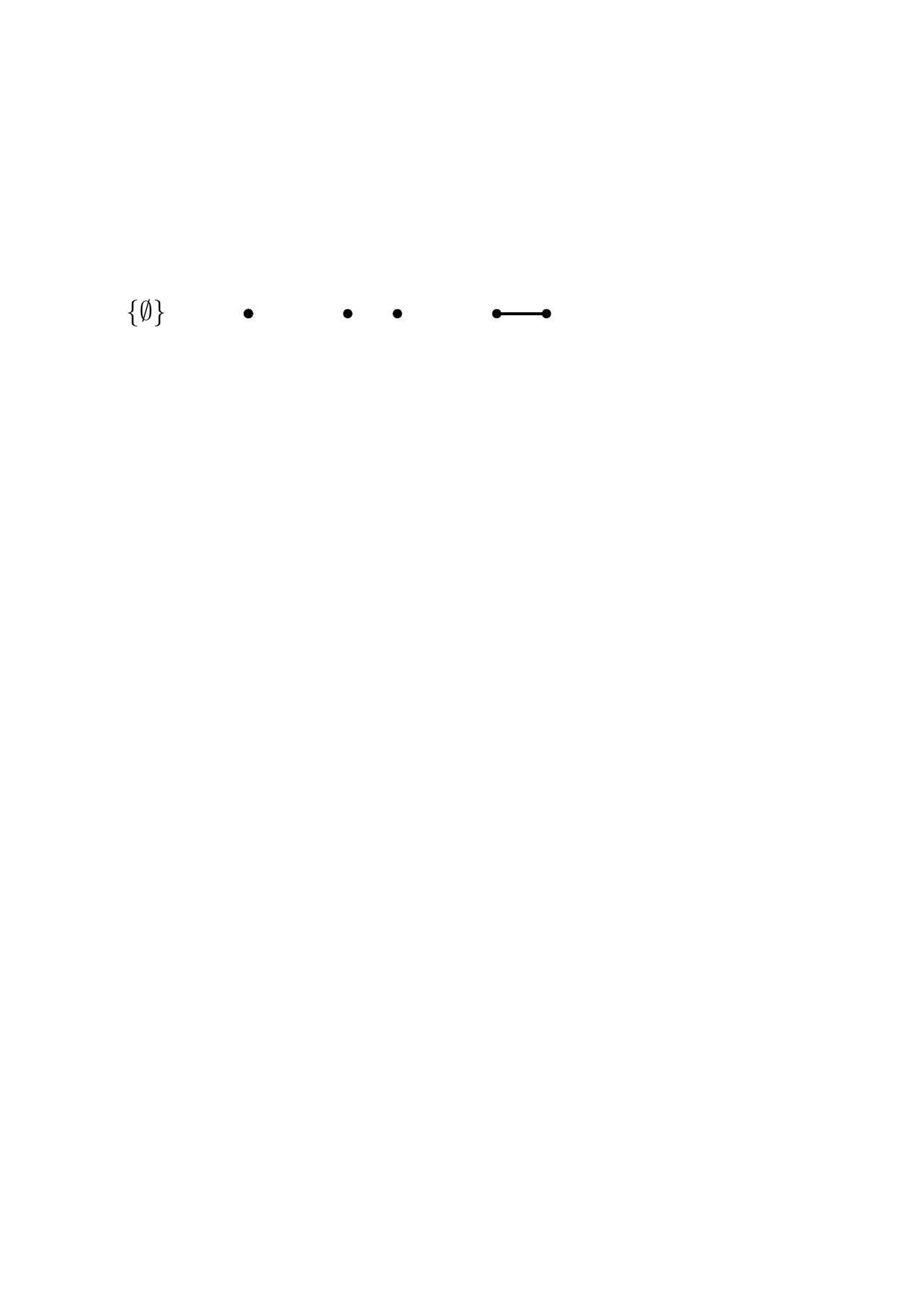}
\caption{Complexes $T_m$ for $\#E=2$}
\label{TME2}
\end{figure}

To give a more particular case, assume $\#E=3$, $E=\{e, e', e''\}$. Now the possibilities for $T_m$ are the empty face, one point, two isolated points, three isolated points, one segment, one segment and one isolated point, two segments sharing one of their extremes, the border of a triangle and a full triangle. Their respective values of $\tilde{\chi}(T_m)$ are $-1, 0, 1, 2, 0, 1, 0, -1$ and $0$. Now $m\in Q$ iff $T_m$ is the empty face, $m\in D$ iff $T_m$ it is not a full triangle nor a segment, $m\in D^{\{e, e'\}}\cap D^{\{e, e''\}}\cap D^{\{e',e''\}}$ iff $T_m$ consists of the three isolated points, but in general describing other topological possibilities in terms of $Q$ and $D^J$ become not so obvious. 

\begin{figure}[h]
\includegraphics[width=80mm]{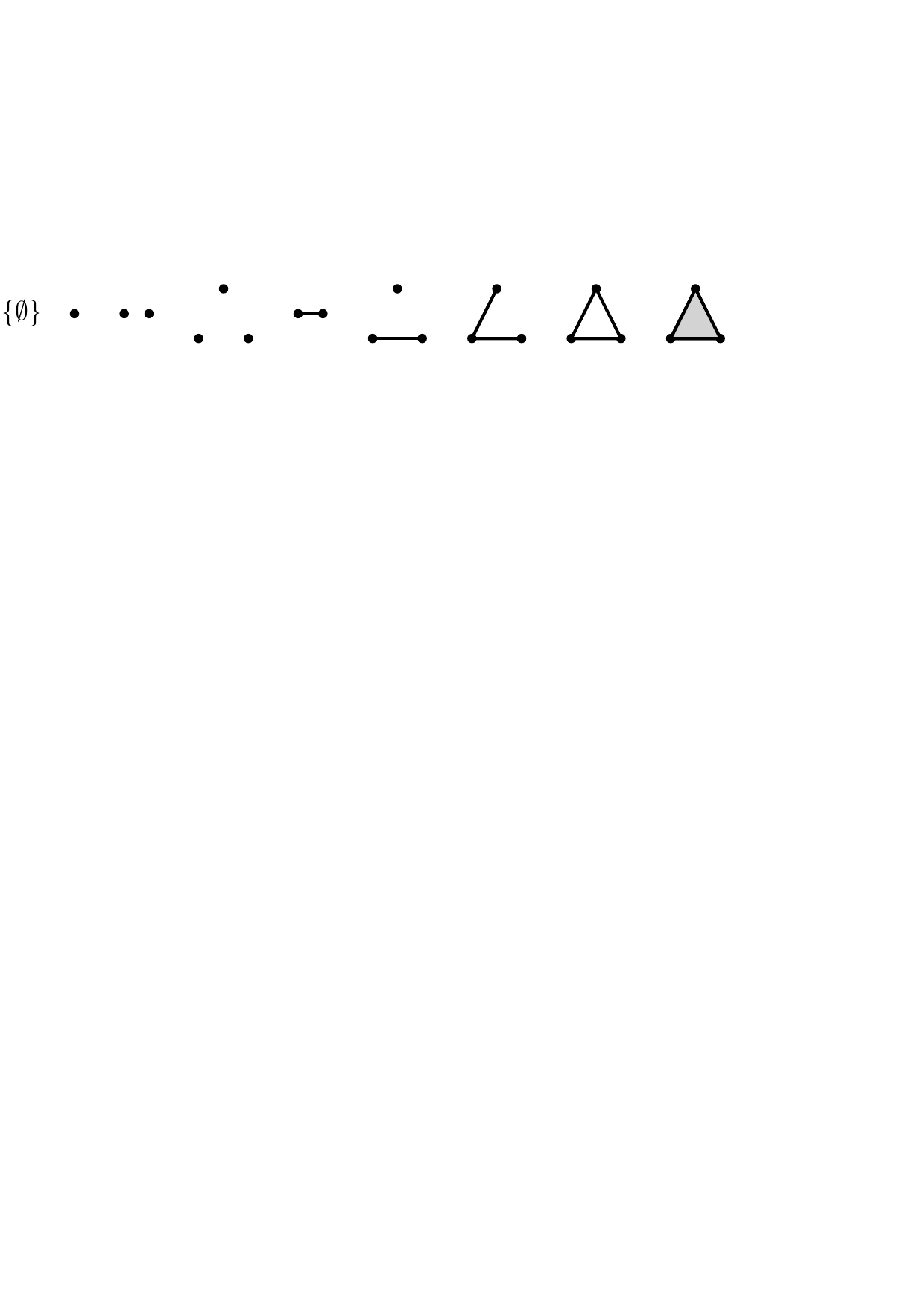}
\caption{\label{figure: two}Complexes $T_m$ for $\#E=3$}
\label{TME3}
\end{figure}

Now, formula \textit{(1)} of algebraic Poincaré series becomes shorter if one collects the terms of the sum having the same topology of $T_m$. Formula \textit{(2)} is given by $$P=\frac{N_Q+N_{D^E}-N_{D^{\{e,e'\}}}-N_{D^{\{e,e''\}}}-N_{D^{\{e',e''\}}}}{(1-t^e)(1-t^{e'})(1-t^{e''})}$$

Above formulas can be used to recover known results in the literature. For instance, last formula for $\#E=3$ allows to recover the results in  \cite{D}.

Above discussion shows how, in fact, one has many different possibilities to exhibit fractional expressions of cyclotomic polynomials of type $1-t^e$. Thus, as corollaries of Theorem \ref{teorema_puntos}, one can find nice practical consequences for the calculus. 

For instance, one of the consequences is formulated below which aims to reduce $\#E$ to 1. 

Let $E\subset S$ be a nonempty finite subset of $S$, $e_E=\sum_{e\in E}e$, and $Q_E=\{m\in S\,\,  |\, \, m-e_E\notin S\}$ the Apéry set of $S$ with respect the single element $e_E$. For each $J'\subset E$, $\# J' \geq 1$, consider the set $$E^{J'}=\{m\in S\, |\, m\notin Q_E, m-e_{J'}\in Q_E\}.$$

\begin{corollary}
With notations as above one has $$P=N_{Q_E}-\dfrac{\sum(-1)^{\#J'}N_{E^{J'}}}{\prod_{e\in E}(1-t^e)}$$ where the sum ranges over the subsets   $J'\subset E$ such that $\#J'\geq 1$.
\end{corollary}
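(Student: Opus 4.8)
The plan is to clear denominators and then verify the resulting identity one monomial at a time. Recall from the preliminaries that $q:=\prod_{e\in E}(1-t^e)$ is a unit of $\hat R$, and that $P=N_S$ by the definition of the Poincaré series; since $Q_E\subseteq S$ this gives $N_{Q_E}-P=N_{Q_E}-N_S=-N_{S\setminus Q_E}$. Multiplying the claimed equality by $q$, it becomes equivalent to
$$\sum_{J'\subset E,\ \#J'\geq1}(-1)^{\#J'}N_{E^{J'}}\ =\ q\,(N_{Q_E}-P)\ =\ -\Big(\prod_{e\in E}(1-t^e)\Big)\,N_{S\setminus Q_E},$$
so it suffices to prove this last identity in $\hat R$.

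I would then expand both sides using $\prod_{e\in E}(1-t^e)=\sum_{J\subset E}(-1)^{\#J}t^{e_J}$ and compare the coefficient of $t^n$ for an arbitrary $n\in S$. On the right-hand side, collecting the monomials $t^{e_J+m}$ with $e_J+m=n$ (so $m=n-e_J\in S\setminus Q_E$), that coefficient equals $-\sum_{J\subset E,\ n-e_J\in S\setminus Q_E}(-1)^{\#J}$; on the left-hand side, by the definition of $E^{J'}$, it equals $\sum_{J'\subset E,\ \#J'\geq1,\ n\notin Q_E,\ n-e_{J'}\in Q_E}(-1)^{\#J'}$. Hence the corollary reduces to showing, for every $n\in S$,
$$\sum_{\substack{J'\subset E,\ \#J'\geq1\\ n\notin Q_E,\ n-e_{J'}\in Q_E}}(-1)^{\#J'}\ =\ -\sum_{\substack{J\subset E\\ n-e_J\in S\setminus Q_E}}(-1)^{\#J}.$$

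To prove this I would split on whether $n\in Q_E$. If $n\in Q_E$, the left sum is empty; the right sum is empty too, since whenever $n-e_J\in S$ we must have $n-e_J\in Q_E$ (otherwise $(n-e_J)-e_E\in S$, and adding $e_J\in S$ gives $n-e_E\in S$, contradicting $n\in Q_E$). If $n\notin Q_E$, then $n-e_E\in S$, hence $n-e_J=(n-e_E)+e_{E\setminus J}\in S$ for \emph{every} $J\subset E$; moreover the restriction $\#J'\geq1$ is now vacuous, because for $J'=\emptyset$ one has $n-e_{\emptyset}=n\notin Q_E$. Thus both sums range over all subsets $J\subset E$, cut out respectively by $n-e_J\in Q_E$ and $n-e_J\in S\setminus Q_E$; as these two conditions partition the subsets of $E$, the required equality is equivalent to $\sum_{J\subset E}(-1)^{\#J}=(1-1)^{\#E}=0$, which holds because $\#E\geq1$.

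I do not expect a real obstacle: the computation is elementary signed counting, of the same flavour as the proof of Theorem~\ref{teorema_puntos}. The only points needing a little care are the case $n\in Q_E$ and the fact that imposing $\#J'\geq1$ changes nothing; both are handled by the two elementary properties of the single-element Apéry set $Q_E$ used above, together with the remark that $E^{\emptyset}=\emptyset$.
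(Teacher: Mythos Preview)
Your proof is correct, but it takes a different route from the paper. The paper derives the corollary as a consequence of Theorem~\ref{teorema_puntos}: it applies formula \textit{(2)} with the enlarged choice $E'=E\cup\{e_E\}$, observes that $Q'=Q$, $D'^J=D^J$ for $J\subset E$, and identifies $D'^{J'\cup\{e_E\}}$ with $E^{J'}$; then it equates this expression with the single-element formula $P=N_{Q_E}/(1-t^{e_E})$ and cancels $1-t^{e_E}$. Your argument bypasses Theorem~\ref{teorema_puntos} entirely and verifies the identity by direct coefficient comparison, using only the two facts that $m\in Q_E$ is upward-closed under subtraction in $S$ and that $n\notin Q_E$ forces $n-e_J\in S$ for all $J\subset E$, together with the binomial identity $\sum_{J\subset E}(-1)^{\#J}=0$.

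What each approach buys: the paper's proof situates the corollary inside the general philosophy of ``change the choice $E$ and compare'', which is the conceptual point the authors want to illustrate; your proof is more self-contained and shows that the formula requires nothing beyond the definition of the Ap\'ery set $Q_E$, so in particular it makes transparent that this corollary does not actually depend on the simplicial complexes $T_m$ or the sets $D^J$.
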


\begin{proof}
Consider the choices $E$ and $E'=E\cup \{e_E\}$. Let $Q$, $D^J$ (resp. $Q', D'^{J}$), $\#J\geq 2$, the combinatorial tools for $E$ (resp. $E'$). One has $Q=Q'$ and $D^J=D'^J$ for $J\subset E$. Moreover, for $J=J'\cup \{e_E\}$, $J'\subset E$, $\#J\geq 1$ one has $D'^J=E^{J'}$. Then, the respective formulas \textit{(2)} for the choices $E'$ and $\{e^E\}$ provide the equality
$$\dfrac{N_{Q_E}}{1-t^{e_E}}=\dfrac{N_Q-\sum_{\substack{J\subset E\\ \#J\geq 2}}(-1)^{\#J}N_{D^J}+\sum_{\substack{J'\subset E\\ \#J'\geq 1}}(-1)^{\#J'}N_{E^{J'}}}{(1-t^{e_E})\prod_{e\in E}(1-t^e)}$$

Now, by multiplying both members by $(1-t^{e_E})$ one gets
$$N_{Q_E}=P+\dfrac{\sum_{\substack{J'\subset E\\ \#J'\geq 1}}(-1)^{\#J'}N_{E^{J'}}}{\prod_{e\in E}(1-t^e)}$$ which proves the result.
\end{proof}

\section{Colored graphs}
A more complete explanation of the result in section 2 can be done by means of colored graphs, a technique considered in \cite{CG} for computing the syzygies of various graded structures of the ring $R$. Here, our objective are algebraic Poincaré series, which is more modest than that of syzygies. However, we think that its explanation by means of colored graphs is interesting by itself.

Fix a finite subset $A$ of $S$ with $A\cap E = \emptyset$ and call colors to the elements $a$ of $A$. Now, consider subsets $B$ of $S$ satisfying the property that if $b\in B$ and $n,n'\in S$ satisfy $b+n+n'\in B$, then $b+n\in B$ and $b+n'\in B$. The key sets $Q$ and $D^J$ for $J\subset E$, $\#E\geq 2$ satisfy that property. Then one has an oriented simple colored graph $\mathcal{G}_B$ defined as follows. Vertices of  $\mathcal{G}_B$ are elements $m\in B$ such that $m-a_I\in B$ where $I\subset A$ and $a_I=\sum_{a\in I}a$. We have an oriented edge $(m,m')$ in $\mathcal{G}_B$ when $m'-m=a$ for some $a\in A$. Each edge is oriented and of color $a$. The graph is simple as $a$ is unique. The graph is infinite when $B$ is infinite. A set $I$ such that $m-a_I\in B$ is said to be a relation (reason in \cite{CG}) of dimension $\#I-1$ for $m$ to be a vertex of $\mathcal{G}_B$.

For $-1\leq \ell\leq \#A-1$ and $m\in S$, $C_\ell (\mathcal{G}_B, m)$ denote the free $\mathbb{Z}$-module generated by all the relations of dimension $\ell$, and $\partial_\ell$ the composition of the simplicial boundary $\delta_\ell$ of the full simplex $\mathcal{P}(A)$ of subsets (parts) of the set $A$, so with set of vertices $A$ with the obvious projection
$$C_{\ell-1}(\mathcal{P}(A))\longrightarrow C_{\ell-1}(\mathcal{G}_B,m).$$
Then, the property required to $B$ shows that $C_\bullet (\mathcal{G}_B, m)$ and $\partial_{\bullet}$ define a chain complex whose homology is denoted by $H_\ell(B,m)$, its rank by $h_\ell(B,m)$, its Euler characteristic by $$\chi(B,m)=\sum_{\ell=-1}^{\#A-1}(-1)^\ell h_\ell (B,m)=\sum_{\ell=-1}^{\#A-1}(-1)^\ell r_\ell (B,m)$$ where $r_\ell(B,m)$ is the number of relations of dimension $\ell$ for $m$ of $\mathcal{G}_B$, and the element $P_{\mathcal{G}_B}\in \hat{R}$ associated to $\mathcal{G}_B$ given by $$P_{\mathcal{G}_B}=\dfrac{-\sum_{m\in S}\chi(B,m)t^m}{\prod_{e\in E}(1-t^e)\prod_{a\in A}(1-t^a)}.$$

Next, we come to the main result of the section.

\begin{theorem}
\label{theorem: teorema 2}
With assumptions and notations as above, one has:
\begin{enumerate}[itemsep=5mm]
\item $P_{\mathcal{G}_B}=\dfrac{N_B}{\prod_{e\in E}(1-t^e)}$ and it does not depend on $A$.
\item  $P=P_S=P_{\mathcal{G}_Q}-\sum_{\substack{J\subset E \\ \#J \geq 2}}(-1)^{\#J}P_{\mathcal{G}_{D^J}}$.
\end{enumerate}
\end{theorem}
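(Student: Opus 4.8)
The plan is to prove part (1) by reindexing the numerator of $P_{\mathcal{G}_B}$, and then to obtain part (2) by feeding part (1) into formula (2) of Theorem~\ref{teorema_puntos}. The one external ingredient is the elementary fact that the Euler characteristic of a finite chain complex of free $\mathbb{Z}$-modules equals the alternating sum of the ranks of its chain groups; for $C_\bullet(\mathcal{G}_B,m)$ this is the stated equality $\chi(B,m)=\sum_{\ell=-1}^{\#A-1}(-1)^\ell r_\ell(B,m)$, where $r_\ell(B,m)=\#\{I\subset A:\#I=\ell+1,\ m-a_I\in B\}$.

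For part (1), I would first rewrite the numerator of $P_{\mathcal{G}_B}$ as
$$-\sum_{m\in S}\chi(B,m)\,t^m=\sum_{m\in S}\ \sum_{\substack{I\subset A\\ m-a_I\in B}}(-1)^{\#I}\,t^m,$$
and then reindex the double sum by pairs $(I,b)$ with $b=m-a_I\in B$. For each fixed $I\subset A$ the map $b\mapsto b+a_I$ is injective by cancellation, it takes values in $S$ (since $B\cup A\subset S$), and every pair $(m,I)$ occurring on the right arises in this way exactly once; hence the numerator equals $\big(\sum_{I\subset A}(-1)^{\#I}t^{a_I}\big)\,N_B=\prod_{a\in A}(1-t^a)\cdot N_B$. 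Dividing by $\prod_{e\in E}(1-t^e)\prod_{a\in A}(1-t^a)$ and cancelling the factors $1-t^a$, which are units of $\hat{R}$, gives $P_{\mathcal{G}_B}=N_B/\prod_{e\in E}(1-t^e)$; since $A$ has vanished from the right-hand side, $P_{\mathcal{G}_B}$ does not depend on $A$.

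For part (2), I would apply part (1) with $B=Q$ and with each $B=D^J$, $J\subset E$, $\#J\geq 2$. These are admissible choices because $Q$ and $D^J$ enjoy the property required of $B$, namely that $b\in B$ and $b+n+n'\in B$ (with $n,n'\in S$) force $b+n\in B$ and $b+n'\in B$: for $Q$ this holds because $S$ is closed under addition (if $(b+n)-e\in S$ for some $e\in E$ then $(b+n+n')-e=((b+n)-e)+n'\in S$, contradicting $b+n+n'\in Q$), and for $D^J$ the same argument controls the condition $m-e_J\notin S$ while the condition $J\subset supp(m)$ is preserved because $b-e\in S$ implies $(b+n)-e\in S$. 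Substituting the expressions from part (1) into $P_{\mathcal{G}_Q}-\sum_{J\subset E,\ \#J\geq 2}(-1)^{\#J}P_{\mathcal{G}_{D^J}}$ then yields
$$\frac{N_Q-\sum_{J\subset E,\ \#J\geq 2}(-1)^{\#J}N_{D^J}}{\prod_{e\in E}(1-t^e)},$$
which is precisely formula (2) of Theorem~\ref{teorema_puntos} for $P=P_S$.

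There is no real obstacle in this argument; the only points that deserve attention are the legitimacy of the reindexing in part (1) — injectivity of $b\mapsto b+a_I$ and the fact that the correspondence does not leave $S$ — and the verification that $Q$ and $D^J$ satisfy the hypothesis on $B$, which is what makes the colored graphs $\mathcal{G}_Q$, $\mathcal{G}_{D^J}$ and the chain complexes $C_\bullet(\mathcal{G}_B,m)$ meaningful in the first place.
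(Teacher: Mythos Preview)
Your proof is correct and follows essentially the same approach as the paper: for part (1) the paper expands $\prod_{a\in A}(1-t^a)\,N_B$ and reindexes to obtain $-\sum_{m\in S}\chi(B,m)t^m$, which is the same computation you carry out in the reverse direction, and for part (2) the paper likewise invokes formula (2) of Theorem~\ref{teorema_puntos} together with the fact that $Q$ and $D^J$ satisfy the closure property required of $B$. Your write-up simply supplies a few details (injectivity of $b\mapsto b+a_I$, the explicit verification for $Q$ and $D^J$) that the paper leaves implicit.
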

\begin{proof}
One has 
\begin{align*}
\prod_{a\in A}(1-t^a)N_B & =\sum_{\substack{I\subset A \\ b\in B}}(-1)^{\#I}t^{a_I+b}\\
&=-\sum_{m\in S}\sum_{\ell=-1}^{\#A-1}(-1)^\ell r_{\ell}(B,m)t^m=-\sum_{m\in S}\chi(B,m)t^m.
\end{align*}

This shows part \textit{(1)}. Part \textit{(2)} follows from formula \textit{(2)} in Theorem \ref{teorema_puntos}, taking into account that the key sets $Q$, $D^J$ satisfy the requirement to $B$.
\end{proof}

The homology of the graphs $\mathcal{G}_Q$ and $\mathcal{G}_{D^J}$ also allows us to compare the combinatorial data $T_m$ for the choice of $E$ and the corresponding simplicial complexes $T'_m$ for the choice of the larger set $E'=E\cup A$. For it, consider another colored graph $\mathcal{G}_{\bar{D}}$ on the key set $D$ defined as before but now taking as relations of dimension $\ell$ the sets $I\cup J$, $I\subset A$, $J\subset E$, $m-a_I\in D^J$ and $\#I\cup J =\ell+2$. Now, denote by $H_\ell(\mathcal{G}_{\bar{D}},m)$, $h_\ell(\mathcal{G}_{\bar{D}}, m)$ and $\chi(\overline{D}, m)$ the corresponding homology, ranks and Euler characteristic, the associated series to $\mathcal{G}_{\bar{D}}$ is defined by $$P_{\mathcal{G}_{\bar{D}}}=\dfrac{-\sum_{m\in S}\chi(\overline{D},m)t^m}{\prod
_{e\in E}(1-t^e)\prod_{a\in A}(1-t^a)}.$$

By adapting, without changes, the proof of \cite[Theorem 2.1]{CG} one gets $$\tilde{\chi}(T'_m)=\chi(Q,m)+\chi(\overline{D},m)$$ for all $m\in S$ and the new formula $$P=P_S=P_{\mathcal{G}_Q}+P_{\mathcal{G}_{\bar{D}}}.$$

Comparing with \textit{(2)} in Theorem \ref{theorem: teorema 2} with in \cite[Theorem 3.1]{CG}, one gets the expression of $P_{\mathcal{G}_{\bar{D}}}$ in terms of the $P_{\mathcal{G}_{D^J}}$

$$P_{\mathcal{G}_{\bar{D}}}=\sum_{\substack{J\subset E \\ \#J\geq 2}}(-1)^{\#J-1}P_{\mathcal{G}_{D^J}}.$$

See \cite{CG} for detailed homology computations when they are applied to homologies instead of Euler characteristics.

\section{Alexander duals}

Consider a nonempty finite set $E$ and a simplicial subcomplex $T$ of the simplex $\mathcal{P}(E)$ consisting of all subsets (parts) of $E$ with set of vertices $E$. The subset 
$$E'=supp(T)=\{e\in E\, | \, \{e\}\in T\}$$ is called the support of $T$, and the respective simplicial complexes 
\begin{align*}
T^\vee & =\{J\subset E'\, |\, E'\backslash J \notin T\}\\
\overline{T}^\vee & =\{L\subset E \, |\, E\backslash L \notin T\}
\end{align*}
are called the Alexander dual and the relative Alexander dual of $T$. The properties of the Alexander duality we will need are summarized in the following result (see also \cite{BM}).

\begin{proposition}
With assumptions and notations as above, on has:
\begin{enumerate}
\item $T$ and $T^\vee$ are subcomplexes of the simplex $\mathcal{P}(E')$, $\overline{T}^\vee$ is a subcomplex of $\mathcal{P}(E)$ and $T^\vee=\overline{T}^\vee\cap\mathcal{P}(E')$. The support of $T^\vee$ can be a proper subset of $E'$.
\item Reciprocity. $\overline{(\overline{T}^\vee)}^\vee=T$
\item $-\tilde{\chi}(T)=(-1)^{\# supp (T)}\tilde{\chi}(T^\vee)=(-1)^{\#E}\tilde{
\chi}(\overline{T}^\vee)$
\item If $T=\{\emptyset\}$, one assumes $\tilde{\chi}(T^\vee)=1$ by convention in order to satisfy \textit{(3)}.
\item $dim \, T \leq \# supp (T) -1\quad and \quad dim\,T^\vee \leq \# supp (T)-3$
\item Combinatorial duality. The simplicial homology $\tilde{H}_{\ell}(\overline{T}^\vee)$ and cohomology $\tilde{H}^{\#E-\ell-3}(T)$ $\mathbb{Z}$-modules are isomorphic for all $\ell$.
\end{enumerate}
\end{proposition}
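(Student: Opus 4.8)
The plan is to obtain (2), (4), (5) and the set-theoretic assertions of (1) straight from the definitions, and to put the real work into the combinatorial Alexander duality statement (6), from which (3) will follow by passing to reduced Euler characteristics.

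For the elementary items: in (1), every face $F\in T$ has $\{e\}\in T$ for each $e\in F$ by downward closure, so $F\subseteq\operatorname{supp}(T)=E'$ and hence $T\subseteq\mathcal{P}(E')$; the inclusions $T^\vee\subseteq\mathcal{P}(E')$, $\overline{T}^\vee\subseteq\mathcal{P}(E)$ hold by definition, both being downward closed since a superset of a non-face of $T$ is again a non-face; and for $J\subseteq E'$ the equivalence $E\setminus J\notin T\iff E'\setminus J\notin T$, which yields $T^\vee=\overline{T}^\vee\cap\mathcal{P}(E')$, comes from writing $E\setminus J=(E\setminus E')\sqcup(E'\setminus J)$ and using that no element of $E\setminus E'$ belongs to any face of $T$. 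That the support of $T^\vee$ may be proper in $E'$ is visible from the example $T=\partial\mathcal{P}(E)$, whose dual is $\{\emptyset\}$. Statement (2) is the computation $L\in\overline{(\overline{T}^\vee)}^\vee\iff E\setminus L\notin\overline{T}^\vee\iff E\setminus(E\setminus L)\in T\iff L\in T$. For (5): faces of $T$ lie in $E'$, so $\dim T\le\#E'-1$; and if $J\in T^\vee$ then $E'\setminus J\notin T$ is neither $\emptyset$ (always a face) nor a singleton $\{e\}$ with $e\in E'$ (a face, as $e\in\operatorname{supp}(T)$), so $\#(E'\setminus J)\ge 2$ and $\dim J\le\#E'-3$.

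The substance is (6). I would work with the pair $(\mathcal{P}(E),T)$, with $n=\#E$. Since $\mathcal{P}(E)$ is a simplex, its reduced cohomology vanishes and the long exact sequence of the pair gives $\tilde{H}^{i}(T)\cong\tilde{H}^{i+1}(\mathcal{P}(E),T)$ for all $i$. The relative cochain group $C^{i}(\mathcal{P}(E),T)$ is free on the $(i+1)$-subsets $F\subseteq E$ with $F\notin T$, and complementation $F\mapsto E\setminus F$ sends these bijectively onto the $(n-i-1)$-subsets $G$ with $E\setminus G\notin T$, i.e. onto the $(n-i-2)$-dimensional faces of $\overline{T}^\vee$; hence $C^{i}(\mathcal{P}(E),T)\cong C_{n-i-2}(\overline{T}^\vee)$ as abelian groups. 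The one genuinely technical point is to fix a total order on $E$ and check that, under this identification, the simplicial coboundary corresponds up to a uniform sign to the simplicial boundary of $\overline{T}^\vee$; this is the standard sign calculation for complementation in a simplex. Granting it, $\tilde{H}^{i}(\mathcal{P}(E),T)\cong\tilde{H}_{n-i-2}(\overline{T}^\vee)$, and combining with the exact sequence — after setting $\ell=n-i-3$ — yields $\tilde{H}_{\ell}(\overline{T}^\vee)\cong\tilde{H}^{\,n-\ell-3}(T)$, which is (6). Passing to ranks, forming alternating sums and reindexing gives $\tilde{\chi}(\overline{T}^\vee)=-(-1)^{\#E}\tilde{\chi}(T)$, the second equality of (3); the first equality $-\tilde{\chi}(T)=(-1)^{\#\operatorname{supp}(T)}\tilde{\chi}(T^\vee)$ follows by running the same argument with $E$ replaced by the ground set $E'=\operatorname{supp}(T)$, which is legitimate because $T$ has full support there. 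Finally (4) is forced: for $T=\{\emptyset\}$ one has $E'=\emptyset$ and $\tilde{\chi}(T)=-1$, so consistency of (3) demands $\tilde{\chi}(T^\vee)=1$, which is adopted as a convention ($T^\vee$ being here the void complex).

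The main obstacle I anticipate is precisely the sign bookkeeping in (6): one must exhibit explicit signs $\varepsilon_F\in\{\pm1\}$ so that $F\mapsto\varepsilon_F\,(E\setminus F)$ is a chain isomorphism from the relative cochain complex of $(\mathcal{P}(E),T)$ onto the chain complex of $\overline{T}^\vee$, and verify that it commutes with the respective (co)boundary maps; alongside this goes the careful treatment of the $(-1)$-dimensional face $\emptyset$ and the void-complex corner cases — exactly what convention (4) absorbs — while the remaining reduced-homology normalizations are routine. A shorter but less self-contained alternative would deduce (6) from topological Alexander duality in the boundary sphere $S^{\#E-2}=|\partial\mathcal{P}(E)|$ together with a homotopy equivalence $|\overline{T}^\vee|\simeq S^{\#E-2}\setminus|T|$; I would nonetheless prefer the chain-level argument, since it remains within the elementary framework the paper emphasizes.
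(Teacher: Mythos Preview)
Your treatment of (1), (2), (4), (5) matches the paper's, which also dispatches them directly from the definitions. The genuine divergence is in how (3) and (6) are handled.

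The paper does \emph{not} derive (3) from (6). It proves (3) by a direct, entirely elementary face count: first it decomposes $\overline{T}^\vee$ as $\{K\cup(E\setminus E'):K\in T^\vee\}$ together with the sets $C\cup F$ with $C\subseteq E'$, $F\subsetneq E\setminus E'$, and sums $(-1)^{\#(\cdot)-1}$ over this decomposition to get $\tilde{\chi}(\overline{T}^\vee)=(-1)^{\#E-\#E'}\tilde{\chi}(T^\vee)$, using only $\sum_{C\subseteq E'}(-1)^{\#C}=0$. Then it computes $\tilde{\chi}(T^\vee)=(-1)^{\#E'-1}\tilde{\chi}(T)$ by the complementary-sum identity $\sum_{J\in T}(-1)^{\#J}+\sum_{J\subseteq E',\,J\notin T}(-1)^{\#J}=0$. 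No homology, no pairs, no sign bookkeeping. For (6), the paper simply cites the literature (Bj\"orner--Tancer) rather than proving anything.

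So your route---prove (6) chain-level via $\tilde{H}^{i+1}(\mathcal{P}(E),T)\cong\tilde{H}^i(T)$ and the complementation isomorphism $C^i(\mathcal{P}(E),T)\cong C_{n-i-2}(\overline{T}^\vee)$, then read off (3) from ranks---is correct and more self-contained on (6), but it imports exactly the sign calculation you flag as the obstacle, and it makes (3) look harder than it is. The paper's approach buys a two-line proof of (3) that needs nothing beyond the binomial identity, at the cost of outsourcing (6); yours buys an in-house proof of (6), at the cost of routing an elementary Euler-characteristic identity through homological algebra. Since in the sequel only (3) is actually used (Theorem~4.2 invokes only Euler characteristics), the paper's trade-off is the more economical one for its purposes.
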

\begin{proof}
Statement\textit{(1)} follows from definitions and \textit{(2)} from the equivalence $$L\in \overline{(\overline{T}^\vee)}^\vee\quad \iff \quad E\backslash L \in \overline{T}^\vee \quad \iff \quad E\backslash (E\backslash L)=L\in T.$$

To prove \textit{(3)} first notice that 
$$\overline{T}^\vee=\{K\cup (E\backslash E')\, |\, K\in T^\vee\} \cup \{C\cup F \, |\, C\subset E',\, F\subset E\backslash E', \ F\neq E\backslash E'\}.$$

Now, one has 
\begin{align*}
\tilde{\chi}(\overline{T}^\vee) & =  (-1)^{\# E\backslash E'}\sum_{K\in T^\vee}(-1)^{\# K-1}+(\sum_{C\subset E}(-1)^{\# C})(\sum_{\substack{F\subset E\backslash E'\\ F\neq E\backslash E'}}(-1)^{\#F-1}) \\
 & = (-1)^{\#E -\# E'}\tilde{\chi}(T^\vee)
\end{align*}

as $$\sum_{C\subset E'}(-1)^{\# C}=(1-1)^{\# E'}=0.$$

On the other hand
\begin{align*}
\tilde{\chi}(T^\vee) & =\sum_{\substack{J\subset E\\ J \notin T}}(-1)^{\#E'-\#J-1} = (-1)^{\#E'-1}\sum_{J\in T}(-1)^{\#J-1}\\
& = (-1)^{\#E'-1}\tilde{\chi}(T)
\end{align*}

as $$\sum_{J\in T}(-1)^{\# J}+\sum_{\substack{J\notin T\\ J\subset E'}}(-1)^{\# J}=(1-1)^{\#E'}=0.$$

Both equalities show the result. 

Statement \textit{(4)} considers the special case in which $T$ consists only of the empty face, in that case $T^\vee$ is not defined as $E'$ is empty too. However, $\overline{T}^\vee$ is defined and it is a ($\#E-2$)-topological sphere, so $\tilde{\chi}(\overline{T}^\vee)=(-1)^{\#E-2}=(-1)^{\#E}$. Then one needs to define $\tilde{\chi}(T^\vee)=1$ in order for \textit{(3)} to be true again.

To see \textit{(5)}, notice that $K\in T^\vee$ implies $\#(E'-K)\geq 2$ as $E'$ is the support of $T$. Thus, the dimension of the face $K$ needs to be $\#E'-3$ at most. 

Finally, statement \textit{(6)} is the central result of Alexander duality of simplicial complexes, and an elementary proof can be found, for instance, in \cite{BM}. The statement \textit{(6)} is true not only for the ring $\mathbb{Z}$, but in fact it is true for any commutative and with unit ring of values.
\end{proof}

Coming back to semigroups, the statements of the proposition apply to the complexes $T_m$ and their duals $T_m^\vee$ and relative duals $\overline{T}_m^\vee$, taking into account that $supp(T_m)=supp(m)$. From it, the main result of this section is directly deduced.

\begin{theorem}
Let $S$ be a finitely generated positive cancelative commutative semigroup and $E\subset S\backslash \{0\}$ a nonempty finite subset. Then one has the following two formulas for its Poincaré series
$P=P_S\in \hat{R}.$

\begin{enumerate}[itemsep=5mm]
\item $$P=\sum_{m \in S}(-1)^{\# supp(m)}\tilde{\chi}(T_m^\vee)t^m$$
\item $$P=\sum_{m\in S}(-1)^{\# E}\tilde{\chi}(\overline{T}_m^\vee)t^m$$
\end{enumerate}
\end{theorem}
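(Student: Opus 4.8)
The plan is to derive both formulas directly from part \textit{(1)} of Theorem \ref{teorema_puntos} together with part \textit{(3)} of the Alexander duality Proposition, specializing the simplicial complexes there to the $T_m$ and their duals. First I would recall that Theorem \ref{teorema_puntos}\textit{(1)} gives
$$\prod_{e\in E}(1-t^e)\cdot P = -\sum_{m\in S}\tilde{\chi}(T_m)t^m,$$
and that by the remark following the Proposition, $supp(T_m)=supp(m)$, so the duality relations apply verbatim with $E'=supp(m)$. The key identities from part \textit{(3)} of the Proposition, applied to $T=T_m$, read
$$-\tilde{\chi}(T_m)=(-1)^{\#supp(m)}\tilde{\chi}(T_m^\vee)=(-1)^{\#E}\tilde{\chi}(\overline{T}_m^\vee),$$
with the convention from part \textit{(4)} handling the degenerate case $T_m=\{\emptyset\}$, i.e. $m\in Q$, where $supp(m)=\emptyset$ and one sets $\tilde{\chi}(T_m^\vee)=1$ so that the sign $(-1)^{\#supp(m)}=1$ correctly reproduces $-\tilde{\chi}(T_m)=-\tilde{\chi}(\{\emptyset\})=1$.

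Next I would simply substitute. Replacing $-\tilde{\chi}(T_m)$ by $(-1)^{\#supp(m)}\tilde{\chi}(T_m^\vee)$ inside the sum gives
$$\prod_{e\in E}(1-t^e)\cdot P = \sum_{m\in S}(-1)^{\#supp(m)}\tilde{\chi}(T_m^\vee)t^m,$$
and since each $1-t^e$ is a unit in $\hat{R}$, dividing through yields formula \textit{(1)} — but one must still check that after dividing by $\prod_{e\in E}(1-t^e)$ one recovers exactly $\sum_{m\in S}(-1)^{\#supp(m)}\tilde{\chi}(T_m^\vee)t^m$ rather than this quantity times a denominator. The cleanest way to phrase this is to observe that the asserted right-hand sides are already elements of $\hat{R}$, so the statement to prove is the identity of series $\prod_{e\in E}(1-t^e)\cdot\big(\sum_{m}(-1)^{\#supp(m)}\tilde{\chi}(T_m^\vee)t^m\big)=-\sum_m\tilde{\chi}(T_m)t^m=\prod_{e\in E}(1-t^e)\cdot P$; cancelling the unit $\prod_{e\in E}(1-t^e)$ then gives the claim. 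Formula \textit{(2)} is obtained identically, using $-\tilde{\chi}(T_m)=(-1)^{\#E}\tilde{\chi}(\overline{T}_m^\vee)$ instead; here the sign $(-1)^{\#E}$ is constant in $m$, so one may even factor it out.

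Honestly, there is no serious obstacle: the theorem is a formal corollary once the duality Proposition is in hand, and the only point requiring a word of care is the bookkeeping around $m\in Q$, where $T_m^\vee$ is literally undefined and the value $1$ is imposed by convention precisely so that formula \textit{(1)} remains valid — this is why the Proposition's part \textit{(4)} was isolated. I would therefore write the proof in two or three lines: invoke Theorem \ref{teorema_puntos}\textit{(1)}, invoke the Proposition's part \textit{(3)} (with part \textit{(4)} for the degenerate term) applied to each $T_m$ using $supp(T_m)=supp(m)$, substitute, and cancel the unit $\prod_{e\in E}(1-t^e)$.
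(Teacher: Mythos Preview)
Your approach is exactly the paper's: invoke Theorem~\ref{teorema_puntos}\textit{(1)}, apply part \textit{(3)} of the Alexander duality Proposition termwise (with the convention of part \textit{(4)} for $m\in Q$), and substitute. The paper's proof is literally the one-liner ``It follows from \textit{(3)} in the previous proposition.''

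However, your attempt to reconcile the denominator is circular. From Theorem~\ref{teorema_puntos}\textit{(1)} and the duality identity $-\tilde{\chi}(T_m)=(-1)^{\#supp(m)}\tilde{\chi}(T_m^\vee)$ you correctly obtain
\[
\prod_{e\in E}(1-t^e)\cdot P \;=\; -\sum_{m\in S}\tilde{\chi}(T_m)t^m \;=\; \sum_{m\in S}(-1)^{\#supp(m)}\tilde{\chi}(T_m^\vee)\,t^m,
\]
and hence $P=\dfrac{\sum_{m\in S}(-1)^{\#supp(m)}\tilde{\chi}(T_m^\vee)\,t^m}{\prod_{e\in E}(1-t^e)}$. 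Your further claim that
\[
\prod_{e\in E}(1-t^e)\cdot\Big(\sum_{m}(-1)^{\#supp(m)}\tilde{\chi}(T_m^\vee)t^m\Big)=-\sum_m\tilde{\chi}(T_m)t^m
\]
is simply false: the two sides differ by the factor $\prod_{e\in E}(1-t^e)$, since the right-hand side already \emph{equals} the sum in parentheses. Cancelling the unit here would force $\prod_{e\in E}(1-t^e)=1$, which is absurd.

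What is actually happening is that the statement of the theorem, as printed, is missing the denominator $\prod_{e\in E}(1-t^e)$ in both formulas; the intended assertions are
\[
P=\dfrac{\sum_{m\in S}(-1)^{\#supp(m)}\tilde{\chi}(T_m^\vee)\,t^m}{\prod_{e\in E}(1-t^e)}
\quad\text{and}\quad
P=\dfrac{(-1)^{\#E}\sum_{m\in S}\tilde{\chi}(\overline{T}_m^\vee)\,t^m}{\prod_{e\in E}(1-t^e)},
\]
which is precisely what your substitution produces. So your instinct that ``something is off with the denominator'' was right; the fix is to correct the statement, not to manufacture an extra cancellation. Drop the faulty paragraph and your proof is complete and identical in spirit to the paper's.
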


\begin{proof}
It follows from \textit{(3)} in the previous proposition.
\end{proof}

Both formulas in above theorem for $P$ are in terms of combinatorial objects. In particular, they become practical when the dual complexes, in particular $T_m^\vee$ , can be easily visualized. For instance, notice that, for $\#E'\leq 6$, \textit{(4)} shows that $dim( T_m^\vee)\leq \#E'-3\leq 3$. Next, we will detail the topological possibilities for $T_m^\vee$ for $\#E'=0,\, 1,\, 2,\, 3,\, 4.$ 

\begin{figure}[h]
\includegraphics[width=70mm]{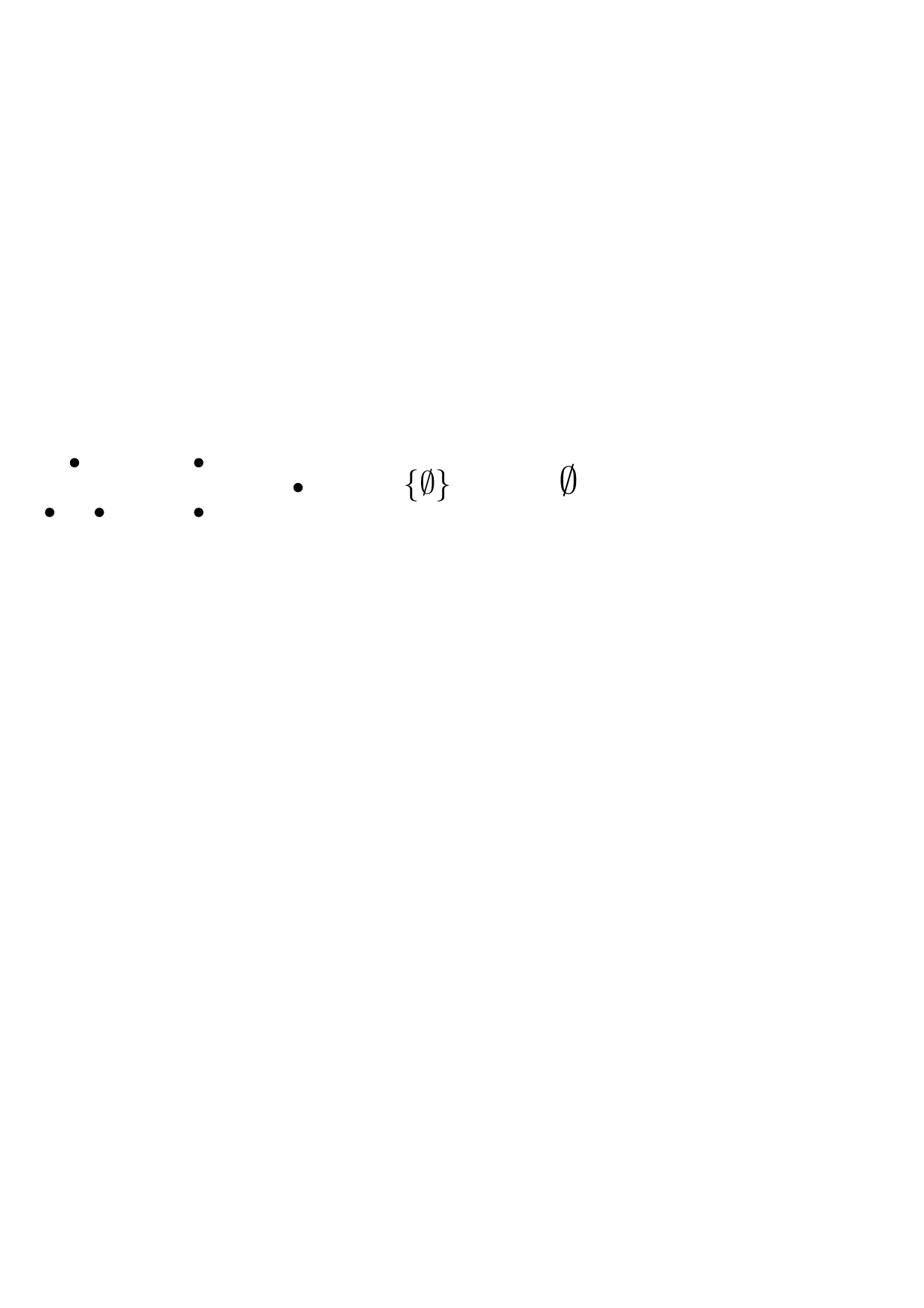}
\caption{}
\label{figure: duals}
\end{figure}

First, $\#E'=0$ only occur for $m\in Q$. The dual $T_m^\vee$ is not defined, but, as said before, the value $\tilde{\chi}(T_m^\vee)$ is defined and equal to 1. If $\#E'=1$, $E'=\{e\}$ then $T_m$ is one point and $T_m^\vee=\emptyset$. If $\#E'=2$, $T_m$ is either two isolated points or one segment, in the first case $T_m^\vee=\{\emptyset\}$, in the second $T_m^\vee=\emptyset$. If $\#E'=3$, the five types for $T_m$ are listed among those in figure \ref{figure: two} and have respective duals $T_m^\vee$ listed in figure \ref{figure: duals}.

Below, we list the 20 possible topological types for $T_m$ and $T_m^\vee$ when $\#E'=4$. We also list the values for $-\tilde{\chi}(T_m)=\tilde{\chi}(T_m^\vee)$. For all lists, one can visualize that $dim (T_m^\vee)\leq \#E'-3.$ 

\vspace{5mm}
\begin{figure}[h]
\includegraphics[width=\linewidth]{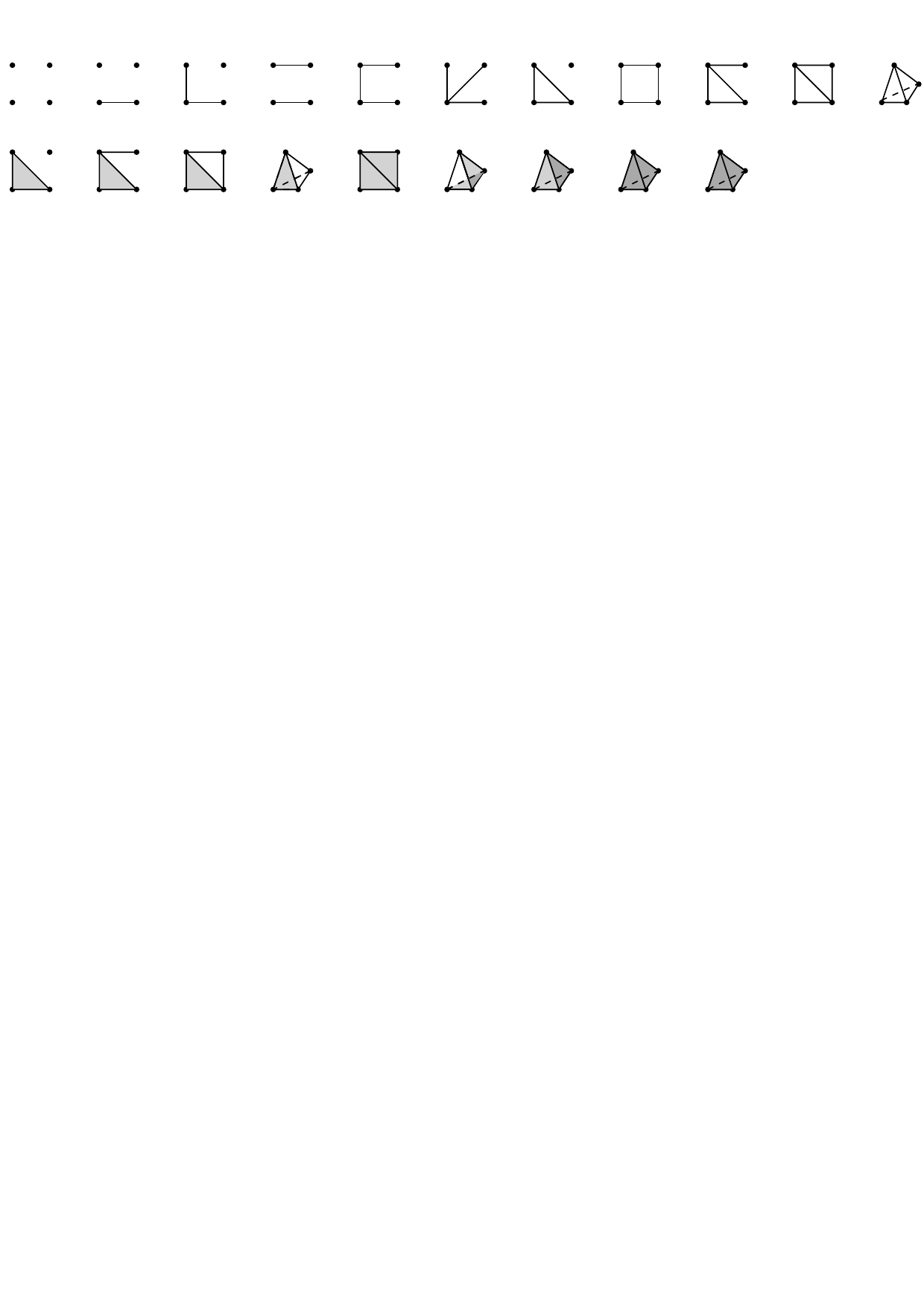}
\caption{}
\label{Figure TM}
\end{figure}

Figure \ref{Figure TM} is the list of the 20 possible topological types for $T_m$ where the last two are the border of the tetrahedron (2-sphere) and the full tetrahedron respectively.

\vspace{5mm}
\begin{figure}[h]
\includegraphics[width=\linewidth]{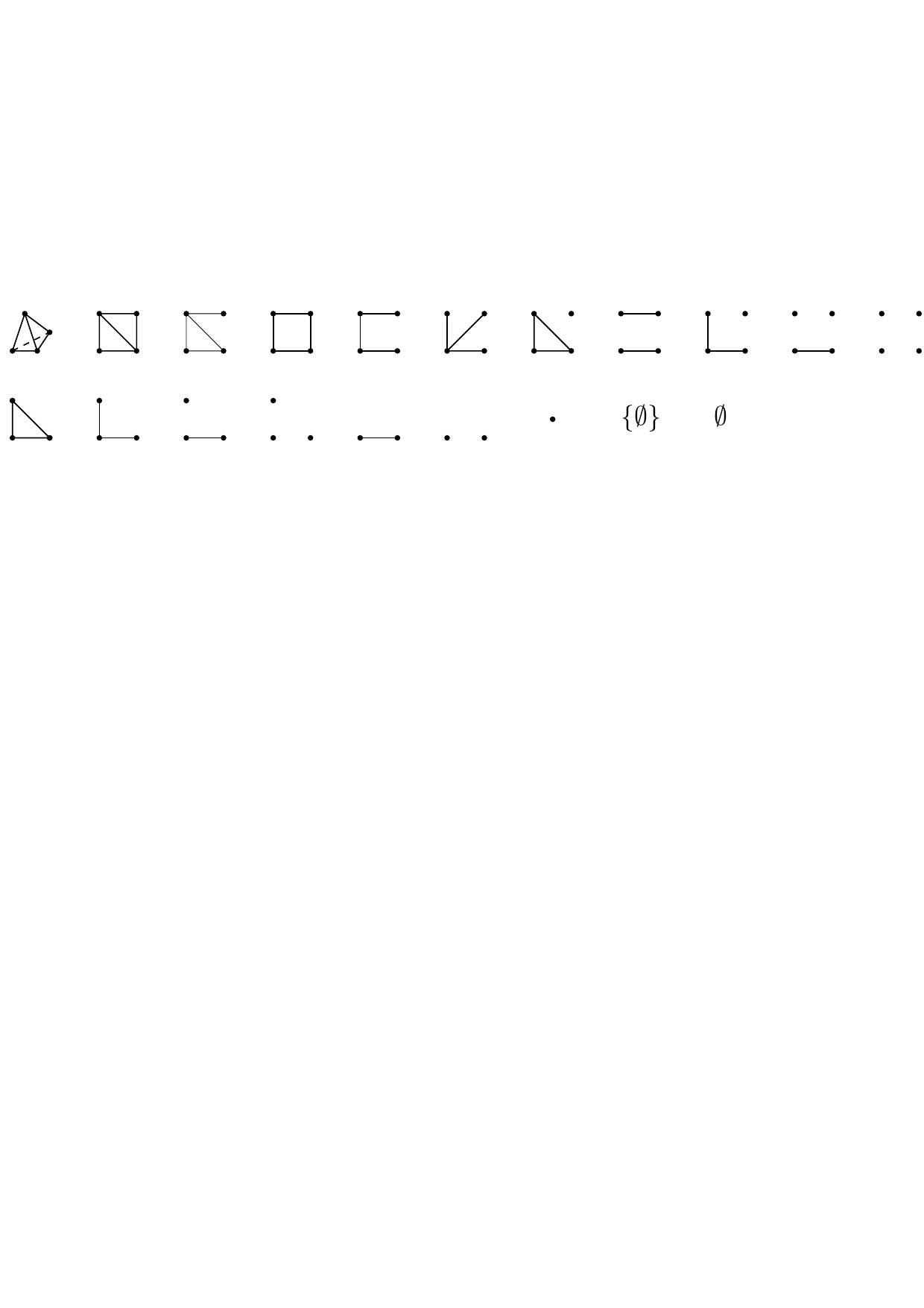}
\label{Figure TMV}
\caption{List of the corresponding $T_m^\vee$.}
\end{figure}

\begin{figure}[h]
\includegraphics[width=\linewidth]{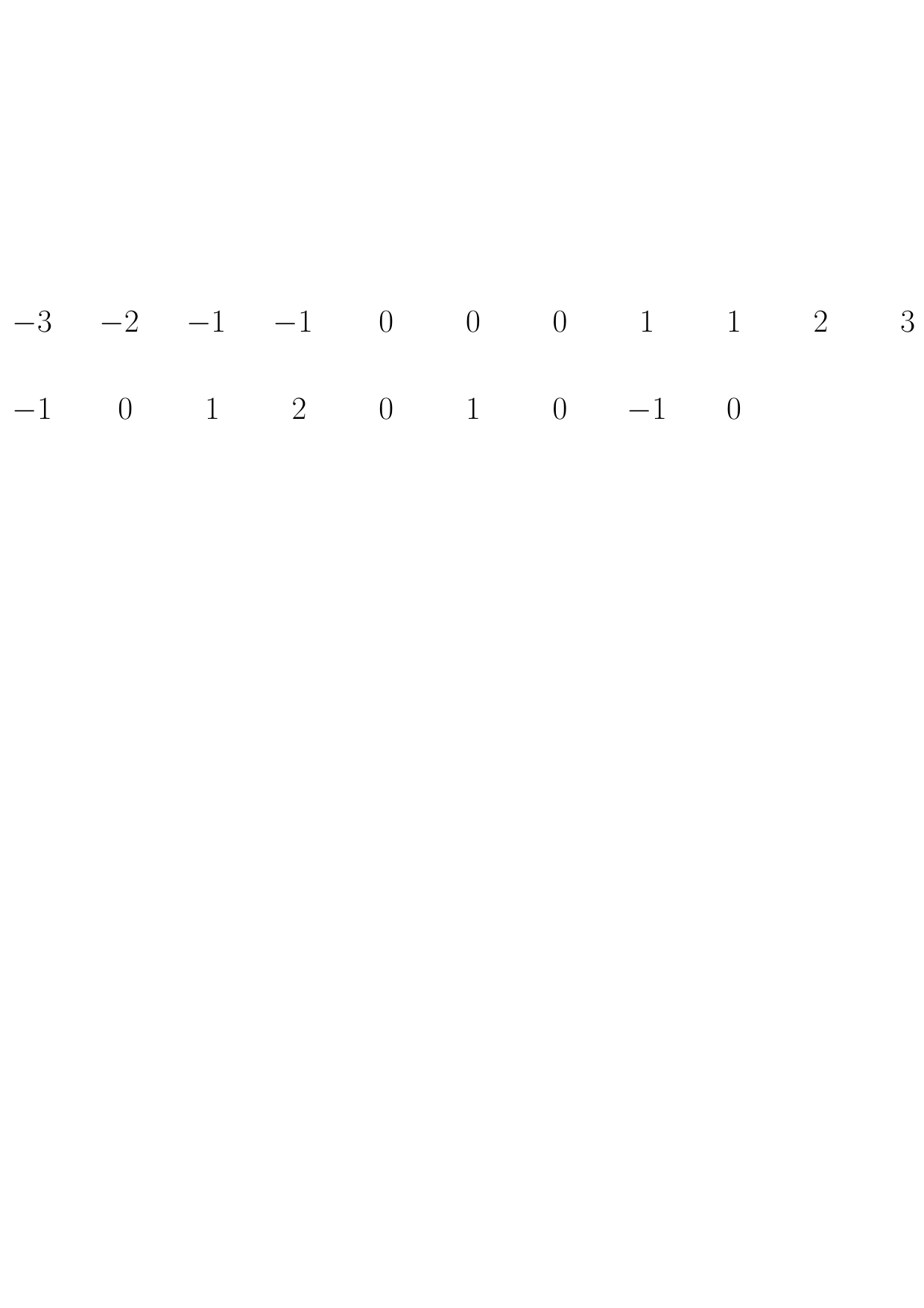}
\label{Figure NUM}
\caption{List of corresponding values of $-\tilde{\chi}(T_m)=\tilde{\chi}(T_m^\vee)$}
\end{figure}

\section{Syzygies}
For a simplicial subcomplex $T$ of the full simplex $\mathcal{P}(E)$ and any field $\mathbb{K}$, let us consider now not only its reduced Euler characteristic $\tilde{\chi}$ but also its reduced homology dimensions $\tilde{h}_j(T),\, -1\leq j\leq \#E-2$, $$\tilde{\chi}(T)=\sum_{j=1}^{\#E-2}(-1)^j \tilde{h}_j(T).$$ We exclude $j=\#E-1$, i.e., $T=\mathcal{P}(E)$ since its homology vanishes.

Although the integer $\tilde{\chi}(T)$ does not depend on the field $\mathbb{K}$, the homology dimensions $\tilde{h}_j(T)$ can depend on the characteristic of $\mathbb{K}$.

A typical classical example is that of the triangulation of the real projective plane in figure \ref{figure: triangulacion}, for which $h_2(T)=1$ for characteristic $\neq 2$ and $h_2(T)=0$ for characteristic 2. Notice that the real projective plane is a nonorientable surface.

\begin{figure}[h]
\includegraphics[width=30mm]{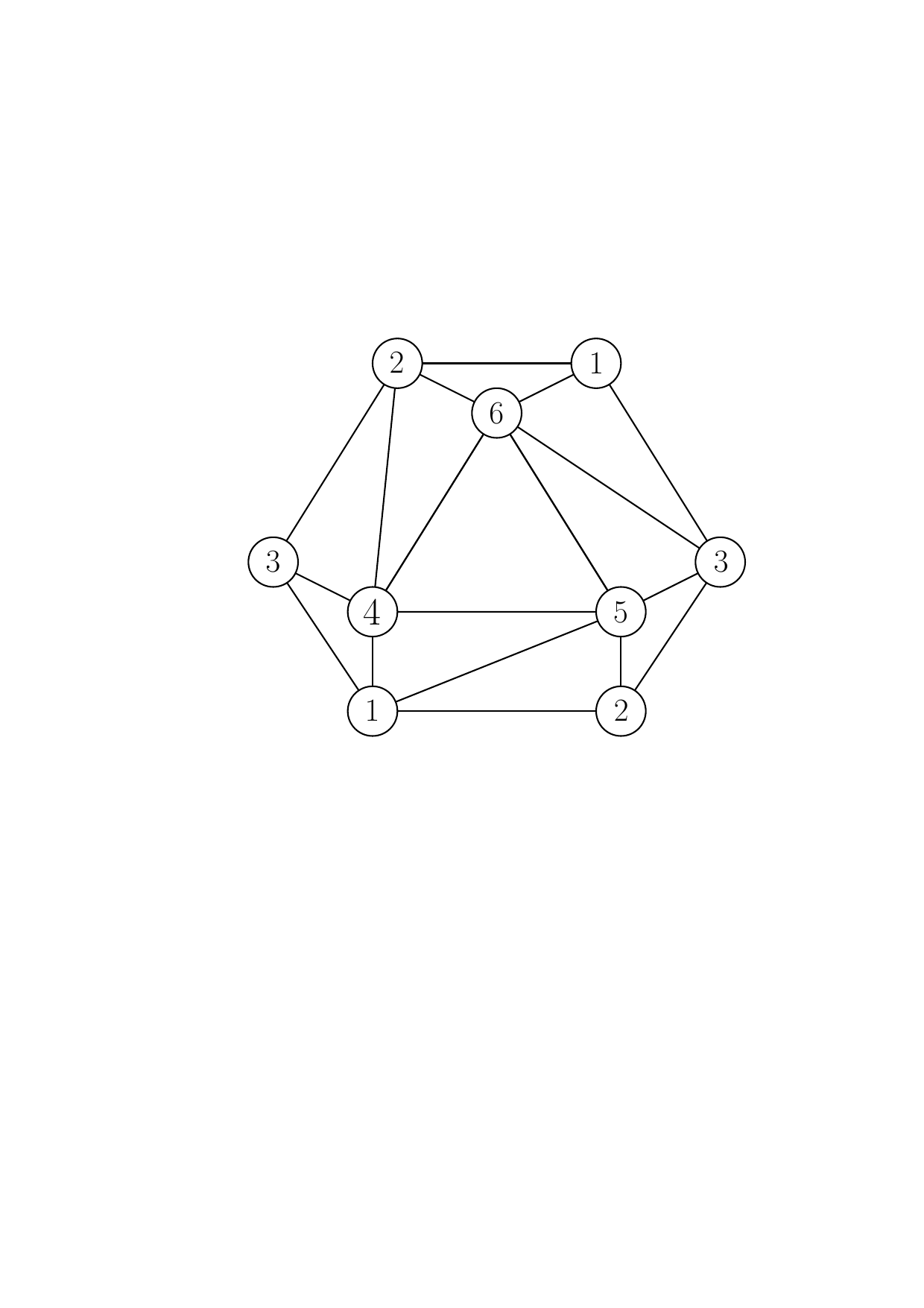}
\label{figure: triangulacion}
\caption{Triangulation of the real projective plane.}
\end{figure}

On the other hand, the homology type of every simplicial complex $T$ with $E' = E$ can be realized as that of one $T_m$ for some concrete semigroup $S$ and $m\in S$, as shown in \cite{BH}. To see it, take the subsemigroup $S$ of $\mathbb{Z}^{\#E+1}$ generated by the elements of type $(0,...,0,1,0,...,0)$ where the last coordinate is 0 and the integer 1 is the coordinate of the label of a vertex of $T$ for a chosen labeling, and by elements of type $(0,1,...,1,0,...,1)$ where 1 is the last coordinate and the other coordinates 1 are those of the vertices of a maximal face of $T$. Then, one can easily check that the element $m=(1,...,1)\in \mathbb{Z}^{\#E +1}$ belongs to $S$ and $T_m$ has the homotopy type of $T$. 

Once the field $\mathbb{K}$ is considered, other elements of $S$ related to the Poincaré series are the degrees of the syzygies of the minimal resolution of the graded semigroup $\mathbb{K}$-algebra $$\mathbb{K}[S]=R\otimes_\mathbb{Z}\mathbb{K}=\bigoplus _{m\in S}\mathbb{K}t^m$$ with grading on $S$. $\mathbb{K}[S]$ is seen as a module on the polynomial ring $$R_E=\mathbb{K}[X_e,\, e\in E]$$ where $X_e$ is a variable for each $e$ and the module structure comes from the $S$-algebra map sending $X_e$ to $t^e$.

The $S$-graded minimal resolution exists when the choice of $E$ generates the cone $C(S)$, as $\mathbb{K}[S]$ becomes a finite $R_E$-module minimally generated by $t^q$ for $q\in Q$. It is of type $$0\rightarrow R_E^{b_s}\rightarrow \cdots \rightarrow R_E^{b_0}\rightarrow R_E^{b_{-1}}\rightarrow \mathbb{K}[S]\rightarrow 0, \quad b_{-1}=\#Q$$ where $b_j,\, -1\leq j \leq s$ are the Betti numbers $b_j>0$. Its length $s$ is called the homological dimension and, according to the Auslander-Buchbaum Theorem, its complement $r=\#E-s$ is nothing but the depth of $\mathbb{K}[S]$. For simplicity, we will assume that $E$ generates all the semigroup $S$, i.e., $Q=\{0\}$, $b_{-1}=1$.

One always has $1\leq r \leq d$. When $r=d$, the algebra $\mathbb{K}[S]$ is said to be Cohen-Macaulay, and when $r=d$ and $b_s=1$ it is called Gorenstein. Moreover, the maps in the resolution are $S$-graded of degree 0, so one has well defined $S$-graded Betti numbers $b_{j,m}\in S$ for $0\leq j \leq s$ and $m\in S$, only finitely many of them being nonzero and one has $$b_j=\sum_{m\in S}b_{j,m}.$$

The graded Betti numbers are directly related to our combinatorial  tools by the equalities $$b_{i+1,m}=\tilde{h}_i(T_m)$$ for $-1\leq i \leq s-1$ and $m\in S$. All above, as well as the computation of the $S$-graded syzygies, is developed in detail in \cite{CG}, \cite{ CP}, \cite{BCPV}. In the Gorenstein case, one gets exactly one $g\in S$ for which $b_{s,g}\neq 0$ and, in fact, $b_{s,g}=1$. For such a $g$, two elements $m,m'$ with $m+m'=g$ are considered to be symmetric one to each other. 

Notice that all above numbers, except the value of the depth $r$ of $\mathbb{K}[S]$, depend on the choice of $E$. However $r$ depends only on $\mathbb{K}[S]$, although it depends on the characteristic of $\mathbb{K}$, as shown in  \cite{TH} by means of another historical example based on the same triangulation of the real projective plane. In particular, the following results are now easy to check.

\begin{proposition}
With assumptions and notations as above one has
\begin{enumerate}
\item $p=1-\sum_{j=0}^s(-1)^j \sum_{m\in S}b_{j,m}t^m$ is a polynomial
\item The following conditions are equivalent:
\begin{enumerate}
\item The algebra $\mathbb{K}[S]$ is Gorenstein
\item One has $b_{j,m}=b_{s-j,m'}$ for all pairs $m,m'$ symmetric with respect to some concrete $g\in S$. In particular $b_{-1}=b_{s,g}=1$
\item For some $g\in S$, $p$ satisfies the equation $p+(-1)^{\#E+d-1}t^g \overline{p}=0$ where $\overline{p}(t)=p(1/t)$.
\end{enumerate}
\end{enumerate}
\end{proposition}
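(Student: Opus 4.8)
The plan is to read everything off Theorem~\ref{teorema_puntos}, the relation $b_{i+1,m}=\tilde h_i(T_m)$ recalled above, and the standard homological algebra of the canonical module. \emph{Part (1).} Since $S$ is positive one has $T_0=\{\emptyset\}$, hence $0\in Q$ and $\tilde\chi(T_0)=-1$, so $b_{-1,0}=1$; and for $m\neq0$ the relation $b_{i+1,m}=\tilde h_i(T_m)$ together with $\tilde\chi(T_m)=\sum_i(-1)^i\tilde h_i(T_m)$ gives $\tilde\chi(T_m)=-\sum_{j\geq0}(-1)^jb_{j,m}$. Hence
$$-\sum_{m\in S}\tilde\chi(T_m)\,t^m=1-\sum_{j\geq0}(-1)^j\sum_{m\in S}b_{j,m}\,t^m,$$
and by parts (1) and (3) of Theorem~\ref{teorema_puntos} the left side is exactly $p=\prod_{e\in E}(1-t^e)\,P\in R$ (equivalently, take $S$-graded Hilbert series along the minimal free resolution, a free summand generated in degree $m$ contributing $t^m/\prod_{e\in E}(1-t^e)$). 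As only finitely many $b_{j,m}$ are nonzero, this expression is a finite $\mathbb{Z}$-combination of monomials, i.e.\ an element of $R=\mathbb{Z}[S]$, which proves (1).

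\emph{Equivalence (a)$\Leftrightarrow$(b).} This is the $S$-graded form of ``$\mathbb{K}[S]$ Gorenstein $\iff$ its minimal free resolution over $R_E$ is self-dual''. For $E$ generating $C(S)$, $\mathbb{K}[S]$ is finite over $R_E$, and it is Gorenstein exactly when it is Cohen--Macaulay---so that $r=d$, equivalently $s=\#E-d$---and its canonical module $\omega=\operatorname{Ext}^{\,\#E-d}_{R_E}(\mathbb{K}[S],\omega_{R_E})$ is cyclic, i.e.\ $\omega\cong\mathbb{K}[S](-g)$ for some $g\in S$. Dualizing the minimal free resolution $F_\bullet$ of $\mathbb{K}[S]$ into $R_E$ and reversing the homological degree produces a minimal free resolution of $\omega$; comparing it, after the shift by $-g$, with $F_\bullet$, and using uniqueness of minimal free resolutions, one sees that $\omega\cong\mathbb{K}[S](-g)$ is equivalent to the symmetry $b_{j,m}=b_{s-j,\,g-m}$ of (b), which in particular forces $b_{-1}=b_{s,g}=1$. (The same symmetry admits a purely combinatorial reading through the Alexander duals of Section~4, via statements (3) and (6) of the Alexander-duality proposition; we do not pursue this here.)

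\emph{Equivalence (b)$\Leftrightarrow$(c).} Substituting $b_{j,m}=b_{s-j,g-m}$ into the formula $p=1-\sum_{j\geq0}(-1)^j\sum_mb_{j,m}t^m$ from part (1) and performing the change of summation $j\mapsto s-j$, $m\mapsto g-m$ (the constant term $b_{-1,0}=1$ pairing with the top term, of degree $g$), then collecting signs, yields $p=(-1)^{\#E-d}t^g\overline{p}$; since $\#E-d\equiv\#E+d\pmod 2$ this is exactly $p+(-1)^{\#E+d-1}t^g\overline{p}=0$, which is (c). This gives (b)$\Rightarrow$(c). Conversely, dividing (c) by $q=\prod_{e\in E}(1-t^e)$ and using $\overline{q}=(-1)^{\#E}t^{-\sigma}q$ with $\sigma=\sum_{e\in E}e$, one checks that (c) is equivalent to a functional equation $P(1/t)=(-1)^dt^{a}P(t)$ of rational functions, i.e.\ to the equality of the $G(S)$-graded Hilbert series of $\omega$ and of $\mathbb{K}[S](-g)$. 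Since the canonical module of a semigroup algebra is a monomial module---its $G(S)$-graded components all have dimension $\leq1$ and are spanned by powers $t^m$---equality of these $\{0,1\}$-valued Hilbert functions forces $\omega=t^g\mathbb{K}[S]\cong\mathbb{K}[S](-g)$; and because a canonical module is always Cohen--Macaulay, this yields at once that $\mathbb{K}[S]$ is Cohen--Macaulay with cyclic canonical module, i.e.\ Gorenstein, hence (b).

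\emph{Main obstacle.} The delicate step is (c)$\Rightarrow$(b): the identity for $p$ only records the alternating sum $\sum_j(-1)^jb_{j,m}$ of the graded Betti numbers---equivalently, by Theorem~\ref{teorema_puntos}(1), only the Euler characteristics $\tilde\chi(T_m)$, so (c) by itself merely gives $\tilde\chi(T_m)=(-1)^{\#E+d}\tilde\chi(T_{g-m})$ for all $m$. Promoting this to the full homological symmetry $b_{j,m}=b_{s-j,g-m}$ (and in particular to Cohen--Macaulayness) requires the extra input above: the explicit monomial description of the canonical module of a semigroup algebra, together with the fact that canonical modules are Cohen--Macaulay. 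The rest is routine bookkeeping of signs and of the shift $g$ across Theorem~\ref{teorema_puntos}, the self-dual resolution, and the Alexander-duality proposition.
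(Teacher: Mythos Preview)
The paper does not actually prove this proposition: it is introduced with ``the following results are now easy to check'' and is followed immediately by a remark, so there is no line-by-line argument to compare against. What the paper evidently has in mind is that (1) is the Betti-number rewriting of Theorem~\ref{teorema_puntos}(1),(3) via the relation $b_{i+1,m}=\tilde h_i(T_m)$, and that (2) is the standard package ``Gorenstein $\Leftrightarrow$ self-dual minimal free resolution $\Leftrightarrow$ functional equation for the numerator of the Hilbert series'', read off in the $S$-grading. Your write-up of part~(1), of (a)$\Leftrightarrow$(b), and of (b)$\Rightarrow$(c) is exactly this and is correct; in that sense you have supplied the details the paper omits rather than taken a different route.

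Your handling of (c)$\Rightarrow$(b), which you rightly flag as the delicate step, has a circularity. You pass from the functional equation on $p$ to one on $P$, and then identify $(-1)^dP(1/t)$ (up to a monomial shift) with the Hilbert series of $\omega=\operatorname{Ext}^{\#E-d}_{R_E}(\mathbb{K}[S],\omega_{R_E})$. But that identification is exactly local/Serre duality for a Cohen--Macaulay module; without Cohen--Macaulayness the Hilbert series of $\mathbb{K}[S]$ is governed by \emph{all} the Ext modules (equivalently all local cohomology), not just the top one, so you are assuming what you then conclude. Likewise ``a canonical module is always Cohen--Macaulay'' is a statement about canonical modules of Cohen--Macaulay rings, so it cannot be used to bootstrap Cohen--Macaulayness of $\mathbb{K}[S]$. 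The way to close the gap, and presumably what the paper means by ``easy to check'', is to stay on the combinatorial side: dividing (c) by $q=\prod_{e\in E}(1-t^e)$ yields the symmetry of $S$ with respect to $f=g-e_E$ (this is exactly the content of the remark following the proposition), and for semigroup rings that symmetry is the known combinatorial criterion for Gorensteinness (classically for $d=1$, and in higher dimension via the references the paper relies on, e.g.\ \cite{CDK}, \cite{TH}). Once Gorenstein is established this way, your self-dual-resolution argument then gives (b) cleanly.
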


\begin{remark}
For general choices of $E$ generating the cone $C(S)$, the independent term of the polinomial in \textit{(1)} is $\# Q$. We have assumed that $E$ generates $S$, thus $Q=\{0\}$, $N_Q=1$. Statements \textit{(1), (2b), (2c)} for a choice $E$ generating the cone $C(S)$ can be deduced from the corresponding to a larger choice $E\cup A$  which generates the semigroup $S$.

If an element $g$ in conditions \textit{(2b)} and \textit{(2c)} exists, then it is unique and the same for both. 

Condition \textit{(2c)} extends, for $d>1$, the usual condition for the symmetry of numerical semigroups. In fact, in that case, one can easily see that $f=g-e_E$ does not depend on $E$ and one has $$\prod
_{e\in E}(1-t^e)(P+t^f \overline{P})=0,$$
where $\overline{P}(t)=P(1/t)$, $c=f+1$ is the conductor of $S$.
\end{remark}

The semigroups of complete intersections are Gorenstein. For the choice of $E$ generating the semigroup $S$ one has $Q=\{0\}$, and the kernel of the first map of the resolution $R_E\rightarrow \mathbb{K}[S]$ is generated by $b_0\geq \#E-d$ $S$-homogeneous polynomials of degrees $m$ for which $b_{0,m} \neq 0$, properly counted. When $b_0=\#E-d$, $\mathbb{K}[S]$ is said to be a complete intersection. In that case, denote by $C$ the set of those degrees, each $m$ repeated $b_{0,m}$ times, and $g=\sum_{c\in C}c$. 

The minimal $S$-graded resolution can be realized as the Koszul complex of the regular sequence given by those $\#E-d$ generators. So, the degrees of the $(j+1)$-th syzygies are the possible sums of $j$ elements of $C$ properly counted, thus, \textit{(1)}, in the proposition shows  $$p=\prod_{c\in C}(1-t^c).$$

This $p$ satisfies the functional equation $$p+(-1)^{\#E+d-1}t^g \overline{p}=0$$ of the statement \textit{(2c)} of the proposition. The Poincaré series $P\in \hat{R}$ is given by a cyclotomic type expression $$P=\dfrac{\prod_{c\in C}(1-t^c)}{\prod_{e\in E}(1-t^e)}$$ and $f=\sum_{c\in C}c-\sum_{e\in E}e=g-e_E$. As far as we know, seeing whether the reciprocal is true is an open question.

Finally, we will show some relations of the depth $r$ and the Poincaré series. Assume again here that $E$ generates the cone $C(S)$. In \cite[Theorem 4.1]{CG}, it is proved that $r$ is the integer satisfying the following two conditions 
\begin{align*}
& \tilde{h}_{\#E-r}(T_m)=0\text{ for all } m\in S \\
& \tilde{h}_{\#E-r-1}(T_m)\neq 0\text{ for some } m\in S. 
\end{align*}
By the Alexander combinatorial duality, above conditions become equivalent to 
\begin{align*}
& \tilde{h}_{r-3}(\overline{T}_m^\vee)=0 \text{ for all }  m\in S\\
& \tilde{h}_{r-2}(\overline{T}_m^\vee) \neq 0\text{ for some }  m\in S
\end{align*}
and corresponding similar conditions for the $T_m^\vee$.

That is useful in practice. For instance, if $\#E=d$, the Cohen- Macaulay condition $r=d$ is that the $T_m$ are connected or $\{\emptyset\}$. For $r=1$, the condition says that some $T_m$ is a sphere. For $r=2$, it is equivalent to that no $T_m$ is a sphere, i.e. no $\overline{T}_m^\vee$ is $\{\emptyset\}$, and some $\overline{T}_m^\vee$ is not connected. For $r=3$ is equivalent to all $\overline{T}_m^\vee$ are connected or $\{\emptyset\}$ and $\tilde{h}_1(\overline{T}_m^\vee)\neq 0$ for some of them.

Above, by sphere, we mean a $(\#E-2)$-dimensional sphere. Also the dual complexes can be visualized in terms of the Apéry set $Q_E$  of the single choice $\{e_E\}$. In fact, one has $J\in \overline{T}_m^\vee$ iff $m+e_J\in Q_E$.

Above results are reformulations of known results in terms of the Alexander duals of the combinatorial tools. Bellow, in theorem \ref{theorem: 5.3} we will give new results in terms of the set theoretical tools of our semigroups.

Next, we will relate the depth $r$ with the key set theoretical tools sets of the semigroup. 

For it, let $r'$ be the integer defined by the two conditions:
\begin{align*}
&D^J=\emptyset\text{ for all }  J\subset E\quad \#J\geq \#E-r'+2\\
&D^J \neq \emptyset\text{ for some }  J\subset  E\quad \#J=\#E-r'+1.
\end{align*}

The integer $r'$ only depends on the sets $D^J$ and not on $\mathbb{K}$, and its relation to $r$ is given in the theorem below.  To stay it, we need a new definition. For a $k$-dimensional sphere of a simplicial subcomplex $T$ with support set $E$, $-1\leq k\leq \#E-2$ we mean a non face $J\notin T$ such that $J\backslash \{e\}\in T$ for all $e\in J$ and $\#J=k+2$.

\begin{theorem}
\label{theorem: 5.3}
With assumptions as above, one has
\begin{enumerate}
\item $r\geq r'$
\item If some $T_m$ has a $(\#E-r'-1)$-sphere, then $r'=r$. 
\item If $D$ is a finite set, then $r'=r$.
\end{enumerate}
\end{theorem}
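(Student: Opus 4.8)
The plan is to translate everything into the language of reduced homology of the complexes $T_m$ via the Alexander combinatorial duality already established, and then exploit the link between the non-faces $J\notin T_m$ and the membership $m\in D^J$. The starting point is the characterization of the depth $r$ recalled above: $r$ is the unique integer with $\tilde h_{\#E-r}(T_m)=0$ for all $m$ and $\tilde h_{\#E-r-1}(T_m)\neq 0$ for some $m$; equivalently, by the combinatorial duality (part (6) of the Proposition of Section 4), $\tilde h_{r-3}(\overline T_m^\vee)=0$ for all $m$ and $\tilde h_{r-2}(\overline T_m^\vee)\neq 0$ for some $m$. The integer $r'$, by contrast, is purely a statement about which $D^J$ are empty, and we must connect the vanishing of $D^J$ for large $\#J$ with the vanishing of high homology of the $T_m$.

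First I would prove \textit{(1)}, $r\geq r'$, equivalently $\#E-r\leq \#E-r'$, i.e.\ that if $\tilde h_i(T_m)\neq 0$ for some $m$ and some $i$, then there is a non-face $J\notin T_m$ with $J\subset supp(m)$ and $\#J$ large, forcing $D^J\neq\emptyset$. Concretely: if $\tilde h_{\#E-r-1}(T_m)\neq 0$ then $T_m\neq\mathcal P(supp(m))$, so $T_m$ has at least one non-face inside its support; by part (5) of the Proposition, homology in degree $i$ requires $i\leq \#E'-2$ where $E'=supp(m)$, and a minimal non-face contributing to that homology has cardinality at least $i+2$. Taking $i=\#E-r-1$ gives a non-face $J\subset supp(m)$ with $\#J\geq \#E-r+1$, hence $m\in D^J$ for that $J$, hence $D^J\neq\emptyset$ with $\#J\geq\#E-r+1$; by definition of $r'$ this forces $r'\leq r$. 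The delicate point here is being careful about minimal non-faces versus arbitrary non-faces: $m\in D^J$ iff $J\subset supp(m)$ and $J\notin T_m$, and a non-face of large cardinality automatically exists once $T_m$ fails to be the full simplex on $supp(m)$ in a way detected by homology in the right degree, but I should check that the \emph{minimal} such non-face is still large enough — this follows because if every minimal non-face had small cardinality, then $T_m$ would contain the entire skeleton up to that dimension, killing the relevant homology.

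Next I would prove \textit{(2)}. Suppose some $T_m$ contains a $(\#E-r'-1)$-sphere, i.e.\ a non-face $J\notin T_m$ with $J\setminus\{e\}\in T_m$ for all $e\in J$ and $\#J=\#E-r'+1$. By the very definition of $r'$ there \emph{is} a $J$ with $\#J=\#E-r'+1$ and $D^J\neq\emptyset$, and the hypothesis upgrades this non-face to a genuine combinatorial sphere sitting inside $T_m$. A minimal sphere of dimension $k$ in a simplicial complex always contributes a nonzero class to $\tilde H_k$ (the boundary of a simplex whose interior is absent is a cycle that is not a boundary, since the complex contains the full boundary but not the face itself — one must argue it is not nullhomologous, which is where a small local computation or an excision/Mayer–Vietoris argument enters). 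This yields $\tilde h_{\#E-r'-1}(T_m)\neq 0$, hence by the characterization of $r$ we get $\#E-r\geq \#E-r'$, i.e.\ $r\leq r'$; combined with \textit{(1)} this gives $r'=r$. I expect the main obstacle of the whole theorem to be precisely this step: arguing rigorously that a minimal $k$-sphere is homologically nontrivial in the ambient complex $T_m$, since $T_m$ may be large and one cannot simply invoke "the boundary of a simplex is a sphere" without controlling what else is attached — the cleanest route is to note that the non-face $J$ together with all its proper subsets forms the boundary $\partial\Delta_J$, which is a subcomplex of $T_m$, and that the generator of $\tilde H_{\#J-2}(\partial\Delta_J)$ does not die in $T_m$ because $J$ itself is never added; this can be phrased via the long exact sequence of the pair $(T_m, T_m\setminus\{\text{faces containing no new filling}\})$ or, more elementarily, by the observation used repeatedly above that $\sum_{K\subset J}(-1)^{\#K}=0$ combined with the structure of a minimal non-face.

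Finally \textit{(3)}: if $D=\bigcup_J D^J$ is finite, then each $D^J$ is finite, and I would argue that $r'=r$ by reducing to \textit{(2)} — namely, show that finiteness of $D$ forces the existence of an $m$ whose $T_m$ actually contains a $(\#E-r'-1)$-sphere rather than merely a non-face of that cardinality. Among all $m\in D^J$ with $\#J=\#E-r'+1$ (a finite, nonempty set by definition of $r'$ and finiteness), pick one that is minimal with respect to the partial order $m'\leq m\iff m-m'\in S$; minimality, together with the defining property of the $D^J$ (the "if $b+n+n'\in B$ then $b+n,b+n'\in B$" condition from Section 3, which $D^J$ satisfies), should force $J\setminus\{e\}\in T_m$ for every $e\in J$ — if some $J\setminus\{e\}\notin T_m$ then $m\in D^{J\setminus\{e\}}$ or one can descend to a smaller element still in some $D^{J'}$ of the same critical cardinality, contradicting minimality or the maximality built into $r'$. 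Then $J$ is a genuine sphere in $T_m$ and \textit{(2)} applies. The subtlety is checking that this minimization does not escape the cardinality constraint $\#J=\#E-r'+1$; this is guaranteed because $D^{J'}=\emptyset$ for all $\#J'\geq\#E-r'+2$ by definition of $r'$, so one can never descend to a strictly larger non-face, pinning the argument exactly at the critical dimension.
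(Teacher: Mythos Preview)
Your part (1) is essentially the paper's argument: both amount to the isomorphism $\tilde H_\ell(T_m)\cong \tilde H_{\ell+1}(\Sigma_m,T_m)$, $\Sigma_m=\mathcal P(supp(m))$, whose relative chains in degree $\ell+1$ are exactly the non-faces $J\subset supp(m)$ with $\#J=\ell+2$, i.e.\ the $J$ with $m\in D^J$; the detour through \emph{minimal} non-faces is unnecessary. Part (2), however, has a genuine gap. The claim that ``a minimal sphere of dimension $k$ always contributes a nonzero class to $\tilde H_k$'' is false: with $E=\{1,2,3,4\}$ and $T$ equal to all subsets of $E$ except $\{1,2,3\}$ and $\{1,2,3,4\}$, the set $J=\{1,2,3\}$ is a $1$-sphere of $T$ in the paper's sense, yet $\partial\Delta_J=\partial(\{1,2,4\}-\{1,3,4\}+\{2,3,4\})$ bounds in $T$ and $\tilde h_1(T)=0$. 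What you are missing is precisely the defining property of $r'$: since $D^K=\emptyset$ whenever $\#K\geq\#E-r'+2$, the relative chain group one degree above $J$ is \emph{zero}, so the relative cycle $J$ (a cycle because each $J\setminus\{e\}$ lies in $T_m$ and hence dies in the quotient) is automatically not a boundary. That single observation is the paper's entire proof of (2); no Mayer--Vietoris or local computation is needed, and without it the step fails.

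Part (3) goes in the wrong direction. If $J$ is not a sphere of $T_m$ because $J\setminus\{e\}\notin T_m$ for some $e\in J$, then $(m+e)-e_J=m-e_{J\setminus\{e\}}\notin S$, so the element one produces is $m+e\in D^J$, strictly \emph{larger} than $m$. Your minimality hypothesis therefore yields no contradiction, and the proposed descent ``to a smaller element still in some $D^{J'}$ of the same critical cardinality'' does not exist. The paper instead iterates this ascent $m\mapsto m+e$ inside $D^J\subset D$; finiteness of $D$ forces termination at some $n$ for which $J$ is a sphere of $T_n$, and then (2) applies. Equivalently, picking $m$ \emph{maximal} in the finite set $D^J$ would give the sphere immediately.
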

\begin{proof}
For $m\in S$ consider the full simplex $\Sigma_m=\mathcal{P}(supp (T_m))$, and the $\mathbb{K}$-vector space isomorphisms $\tilde{H}_{\ell+1}(\Sigma_m,T_m)\cong \tilde{H}_{\ell}(T_m)$ given by the homology long exact sequence.

First, take $\ell=\#E-r'$. The $\ell+1=(\#E -r'+1)$-dimensional chain space for $(\Sigma_m,T_m)$ is generated by the subsets $J\subset E$ with $\#J=\#E-r'+2$ and $m\in D^J$. By definition of $r'$ one has $D^J=\emptyset$, so $$\tilde{h}_{\#E-r'+1}(\Sigma_m,T_m)=h_{\#E-r'}(T_m)=0$$ for all $m\in S$. Now, by the properties of $r$, one deduces $r\geq r'$. This shows \textit{(1)}.

Second, take $\ell=\#E-r'-1$ and a $(\#E-r'-1)$-sphere $J$ of some $T_m$. Then, $\#J=\#E-r'+1$. $J$ is a relative homology chain which, by construction, it is a cycle and not a border, so $\tilde{h}_{\#E-r'}(\Sigma_m, T_m)=\tilde{h}_{\#E-r'-1}(T_m)\neq 0$. This shows $r'=r$ as required in \textit{(2)}.

To prove \textit{(3)}, take $J\subset E$, $\#J=\#E-r'+1$, such that $D^J\neq \emptyset$, and an element $m\in D^J$. One has $J\subset supp(m)$ and $m-e_J\notin S$. If $J$ is a $(\#E-r'-1)$-sphere of $T_m$, the result follows from \textit{(2)}. Otherwise, there exists $e\in supp(m)$ such that $m-e_J+e=m+e-e_J\notin S$. Now continue with $m+e$ instead of $m$. If $J$ is a $(\#E-r'-1)$-sphere of $T_{m+e}$ the result follows from \textit{(2)}, if not repeat the argument. This procedure generates a increasing sequence of complexes $T_n$ with $J\notin T_n$ and $J\subset supp(n)$. Since $D$ is finite, $J$ needs to be a $(\#E-r'-1)$-sphere of $T_n$ for some $T_n$ in the sequence. This completes the proof of the theorem applying again \textit{(2)}.
\end{proof}

\begin{remark}
\text{(2)} stands also for cells. Requirements of \textit{(2)} and \textit{(3)} in the theorem for having $r=r'$ look rather strong. In fact $r=r'$ is impossible without requirements as $r$ can depend on the field characteristic, but $r'$ does not depend on it. However, the existence of a $(\#E-r'-1)$-sphere of \textit{(2)} could be verified in concrete cases and should be investigated in further others. 

For example if $r=2$, $\#E=4$, then $r'=1,2$ as $r'\leq r$. The conditions defining $r$ are $\tilde{h}_{-1}(\overline{T}_m^\vee)=0$ for every $m$ and $\tilde{h}_0(\overline{T}_m^\vee) \neq 0$ for some $m$, and the condition \textit{(2)} is the existence of a $1$-sphere if $r'=2$ and a $2$-sphere if $r'=1$. Our figures in section 4 show how all non connected $\overline{T}_m^\vee$ with less than 4 vertices have a dual $T_m$ containing the border of a non full triangle or a quadrangle, i.e. a $1$-sphere. So $r=r'$ in that case, i.e. $r'=2$ as we knew.

Other example is $\#E=d=r$, whose conditions are $\tilde{h}_0(T_m)=0$ for all $m$ and $\tilde{h}_{-1}(T_m)\neq 0$ for some $m$. The second one shows directly that a $(-1=\#E-r'-1)$-sphere of one $T_m$ exists, but this also follows from the fact that $Q\neq \emptyset$. This is the condition in \textit{(2)} for the case $r=r'$, in general $r'\leq r$. In this case one has $D^J=\emptyset$ for every $J\subset E$, $\#J\geq 2$, i.e. $D=\emptyset$. In particular, one has that $m-e\in S$, $m-e'\in S$, $e\neq e'$ implies $m-e-e'\in S$ for the semigroup. Notice that the first condition in the definition of $r'$ can be also stated as $D^J=\emptyset$ for all $J$ with $\#J=\#E-r'+2$, as this implies that $D^J=\emptyset$ also for larger cardinality of $J$. On the other hand, since sets $D^J$ are only defined for $\#J\geq 2$, the second condition in the definition of $r'$ in assumed by convention in the case $r'=\#E$, i.e. $D=\emptyset$.

To finish the paper, assume a double choice $E$ and $E\cup A$. the first one generating the cone $C(S)$ and the second one generating the semigroup $S$. The syzygies Poincaré series $P^h$ is defined as the element in $R[v]$ given by $$P^h=1-\sum_{j=0}^s \sum_{m\in S}b_{j,m}v^jt^m$$ where $b_{j,m}$ are the graded Betti numbers of the minimal resolution of $\mathbb{K}[S]$ as before. Again, we take the second choice for the sake of simplicity. One also has $$P^h=\sum_{i=-1}^{s-1}\sum_{m\in S}\tilde{h}_i(T_m)v^it^m.$$ It is clear that, evaluating $P^h$ for $v=-1$, one gets $P$. 
\end{remark}

The main results of \cite{CG} show that also $P^h$ is determined, not only by the combinatorial tools, but also by the set theoretical ones. Thus, we have the following result.

\begin{proposition}
\begin{enumerate}
\item $P^h$ can be computed, in linear algebra terms, from the complete homologies of the colored graph $\mathcal{G}_D$ and the homologies of the colored graphs $\mathcal{G}_Q$. 
\item The complete homologies of $\mathcal{G}_D$ can be computed, in linear algebra terms, from the homologies of the colored graphs $D^J$.
\end{enumerate}
\end{proposition}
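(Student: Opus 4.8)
The plan is to upgrade the Euler-characteristic identities of Section~3 to the level of full homology, which is precisely the setting in which the corresponding arguments of \cite{CG} were carried out; granting this, the computation of $P^h$ reduces to running long exact sequences and spectral sequences of finite complexes of finite-dimensional $\mathbb{K}$-vector spaces, i.e.\ to linear algebra. Throughout, $\mathcal{G}_{\overline{D}}$ denotes the colored graph on the set $D$ introduced after Theorem~\ref{theorem: teorema 2}, that is, the $\mathcal{G}_D$ of the statement.

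Fix the double choice $E\subset E'=E\cup A$ with $E'$ generating $S$, and write $T'_m$ for the simplicial complex attached to $E'$, so that $b_{j,m}=\tilde{h}_{j-1}(T'_m)$ and $P^h=1-\sum_{j\ge 0}\sum_{m\in S}\tilde{h}_{j-1}(T'_m)\,v^j t^m$. For part~(1), I would start from the observation that the identity $\tilde{\chi}(T'_m)=\chi(Q,m)+\chi(\overline{D},m)$, recorded after Theorem~\ref{theorem: teorema 2}, is the Euler-characteristic shadow of a short exact sequence of chain complexes
\[
0\longrightarrow \mathcal{C}^{Q}_\bullet(m)\longrightarrow C_\bullet\bigl(\mathcal{P}(supp(m)),T'_m\bigr)\longrightarrow \mathcal{C}^{\overline{D}}_\bullet(m)\longrightarrow 0 ,
\]
where $\mathcal{C}^{Q}_\bullet(m)$ and $\mathcal{C}^{\overline{D}}_\bullet(m)$ are built, up to shift, from the colored-graph chain complexes $C_\bullet(\mathcal{G}_Q,m)$ and $C_\bullet(\mathcal{G}_{\overline{D}},m)$; this is the filtration underlying \cite[Theorem 2.1]{CG}, where the property required of $B$ is what makes the arrows chain maps. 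Passing to the long exact homology sequence, and using that $\mathcal{P}(supp(m))$ is acyclic so that $\tilde{h}_n(\mathcal{P}(supp(m)),T'_m)=\tilde{h}_{n-1}(T'_m)$ (the elements $m\in Q$, for which $T'_m=\{\emptyset\}$, being accounted for directly by the $\mathcal{G}_Q$-term), one sees that each $\tilde{h}_i(T'_m)$, hence each $b_{j,m}$, is a $\mathbb{Z}$-linear combination of the numbers $h_\ell(Q,m)$, $h_\ell(\overline{D},m)$ and the ranks of the connecting homomorphisms, all extracted from the homology modules by linear algebra. Summing over $i$ and $m$ yields $P^h$ and proves~(1).

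For part~(2), I would exploit that, for each fixed $m$, a relation of dimension $\ell$ of $\mathcal{G}_{\overline{D}}$ is a set $I\cup J$ with $I\subset A$, $J\subset E$, $m-a_I\in D^J$ and $\#(I\cup J)=\ell+2$, and that $A\cap E=\emptyset$ lets one recover $I=(I\cup J)\cap A$ and $J=(I\cup J)\cap E$. Hence $C_\bullet(\mathcal{G}_{\overline{D}},m)$ carries the filtration by $\#J$, whose associated graded complex is, after the obvious reindexing, the direct sum over the subsets $J\subset E$ with $\#J\ge 2$ of the shifted chain complexes $C_{\bullet-\#J+1}(\mathcal{G}_{D^J},m)$. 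Therefore the $E_1$-page of the associated spectral sequence is $\bigoplus_{J}H_{\bullet-\#J+1}(\mathcal{G}_{D^J},m)$, with differentials $d_r$ ($r\ge 1$) induced by the face maps $J\mapsto J\setminus\{e\}$; that these descend to homology is again an instance of the $B$-property, now for the sets $D^{J\setminus\{e\}}$, handled as in \cite[Theorem 3.1]{CG}. Since $A$ and $E$ are finite, the spectral sequence has finitely many nonzero pages, each of finite $\mathbb{K}$-dimension, it converges to $H_\bullet(\mathcal{G}_{\overline{D}},m)$, and every page is obtained from the previous one by taking the homology of a finite complex; this is linear algebra, which proves~(2).

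I expect the main obstacle to be the bookkeeping in these two reductions rather than any genuinely new idea: one must pin down $\mathcal{C}^{Q}_\bullet(m)$, $\mathcal{C}^{\overline{D}}_\bullet(m)$ and the reindexing shifts so that the filtrations are truly compatible with the differentials, and in~(2) one must verify that deleting a vertex $e\in J$ from a relation of $\mathcal{G}_{\overline{D}}$ --- which in general falls outside $D^{J\setminus\{e\}}$ --- nevertheless induces a well-defined map on the colored-graph homologies, so that one really has a filtration by subcomplexes rather than a splitting. All of this has, however, been carried out at the level of homology (not merely of Euler characteristics) in \cite{CG}; exactly as with the identity $\tilde{\chi}(T'_m)=\chi(Q,m)+\chi(\overline{D},m)$ and the formula $P=P_{\mathcal{G}_Q}+P_{\mathcal{G}_{\overline{D}}}$, one imports those arguments here without essential change, \cite{CP} and \cite{BCPV} containing the same circle of ideas in related situations.
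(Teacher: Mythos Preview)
Your proposal is correct and follows essentially the same route as the paper: both defer the substance of the argument to \cite{CG}. The paper's own proof is the single sentence ``It follows from Theorem 2.1, Propositions 3.1 and 3.2 in \cite{CG}'', so your sketch of the short-exact-sequence and filtration/spectral-sequence machinery is a reasonable elaboration of what that citation is meant to invoke rather than a genuinely different argument.
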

\begin{proof}
It follows from Theorem 2.1, Popositions 3.1 and 3.2 in  \cite{CG}.
\end{proof}

\bibliographystyle{amsalpha}

\end{document}